\documentclass[
%%% one of
%submission
final
%proceedings
%%% if you compile a final version for the old OJS platform
% , ojs
%%% if all authors have the same affiliation
% , nomarks
]{dmtcs-episciences}

% DON'T LOAD ANY STYLES THAT CHANGE THE PAGE LAYOUT
% AND DON'T CHANGE THE PAGE LAYOUT BY HAND, EITHER.

\usepackage[utf8]{inputenc}
\usepackage{subfigure}
\usepackage{amsmath}
\usepackage{amssymb}
\usepackage{amsfonts}
\usepackage{amsthm}
\usepackage{thm-restate}
\usepackage{hyperref}
\usepackage{cleveref}
\usepackage{xcolor}
\usepackage{mathtools}
\usepackage{macros}
\usepackage{nicefrac}

\usepackage{tikz}
\usetikzlibrary{shapes.geometric,hobby,calc} % für ellipse
\usetikzlibrary{decorations.pathmorphing}
\usetikzlibrary{decorations.text}
\usetikzlibrary{shapes.misc}
\usetikzlibrary{decorations,shapes,snakes}

% graphicx is now loaded automatically no need to put this in here anymore.
%
%\usepackage{graphicx}

% We strongly recommend to use natbib. Your colleagues deserve to be
% named in your text. PLEASE, ADAPT YOUR TEXT ACCORDINGLY, such that
% citations are grammatically correct.
\usepackage[round]{natbib}

\author{Meike Hatzel\affiliationmark{1}
  \and Marcin Pilipczuk\affiliationmark{2}
  \and Pawe\l{} Komosa\affiliationmark{2}
  \and Manuel Sorge\affiliationmark{2}}
\title[Constant Congestion Brambles]{Constant Congestion Brambles\thanks{\parbox[t]{0.99\textwidth}{\parbox[t]{0.85\textwidth}{The research leading to the results presented in this paper was partially carried out during the Parameterized Algorithms Retreat of the University of Warsaw, PARUW~2020, held in Krynica-Zdrój in February 2020.
This research is part of projects that have received funding from the European Research Council (ERC) under the European Union's Horizon 2020 research and innovation programme
Grant Agreements 648527 DISTRUCT (Meike Hatzel) and 714704 CUTACOMBS (Marcin Pilipczuk, Manuel Sorge).}\quad\parbox[t]{0.05\textwidth}{~\\[-3mm]\includegraphics[width=30px]{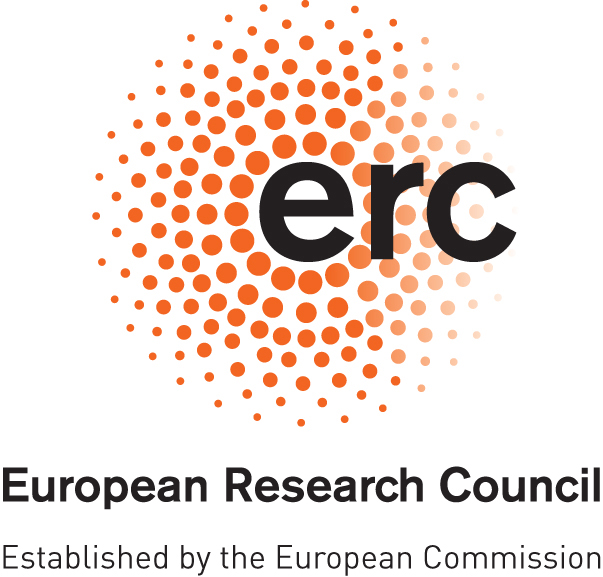}\\\includegraphics[width=30px]{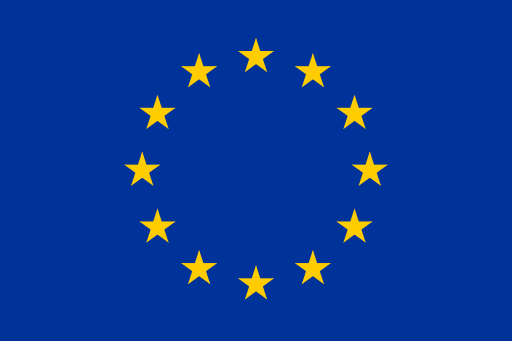}}}}}
% put your affiliation here, not your full address.
% If you like to give away your email or other parts of your address,
% THIS IS NOT THE RIGHT PLACE, your address will change, this paper
% will not.
% Just watch that your personal data that you want to communicate on
% the episcience server is always up to date.
\affiliation{
  % one line per affiliation, no postal codes, grant numbers or similar
  TU Berlin, Germany\\
  University of Warsaw, Poland}
\keywords{bramble, constant congestion}
% don't try to cheat here, we will check the dates!
\received{2020-08-06}

\revised{2021-10-12}

\accepted{2022-02-09}

\begin{document}
\publicationdetails{24}{2022}{1}{6}{6699}
\maketitle
\begin{abstract}
	A \emph{bramble} in an undirected graph $G$ is a family of connected
	subgraphs of $G$ such that for every two subgraphs $H_1$ and $H_2$ in the bramble
	either $V(H_1) \cap V(H_2) \neq \emptyset$ or there is an edge of $G$ with one endpoint
	in $V(H_1)$ and the second endpoint in~$V(H_2)$. 
	The \emph{order} of the bramble is the minimum size of a vertex set that intersects
	all elements of a bramble.
	
	Brambles are objects dual to treewidth: As shown by Seymour and Thomas,	the maximum order of a bramble in an undirected graph $G$ equals one plus the treewidth of $G$.
	However, as shown by Grohe and Marx, brambles of high order may necessarily be of exponential size:
	In a constant-degree $n$-vertex expander a bramble of order $\Omega(n^{1/2+\delta})$ requires size
	exponential in $\Omega(n^{2\delta})$ for any fixed $\delta \in (0,\frac{1}{2}]$.
	On the other hand, the combination of results of Grohe and Marx	and Chekuri and Chuzhoy shows that a graph of treewidth $k$
	admits a bramble of order~$\widetilde{\Omega}(k^{1/2})$ and size $\widetilde{\Oh}(k^{3/2})$.
	($\widetilde{\Omega}$ and $\widetilde{\Oh}$ hide polylogarithmic divisors and factors, respectively.)
	
	In this note, we first sharpen the second bound by proving that 
	every graph $G$ of treewidth at least~$k$ 
	contains a bramble of order
	$\widetilde{\Omega}(k^{1/2})$ and congestion $2$, i.e., 
	every vertex of $G$ is contained in at most two elements of the bramble (thus the bramble is of size linear in its order). %~$\widetilde{\Oh}(k^{1/2})$). 
	Second, we provide a tight upper bound for the lower bound of Grohe and Marx:
	For every $\delta \in (0,\frac{1}{2}]$, every graph $G$ of treewidth at least $k$
	contains a bramble of order $\widetilde{\Omega}(k^{1/2+\delta})$
	and size~$2^{\widetilde{\Oh}(k^{2\delta})}$. 
\end{abstract}
%\begin{textblock}{20}(0.2, 12.5)
%	\includegraphics[width=40px]{logo-erc}%
%\end{textblock}
%\begin{textblock}{20}(0.2, 13.4)
%	\includegraphics[width=40px]{logo-eu}%
%\end{textblock}

\section{Introduction}
\emph{Treewidth} is a well-known measure of how tree-like a graph is:
For example, a graph is a forest if and only if it has treewidth at most $1$, while an $n$-vertex
clique has treewidth $n-1$.
The notion of treewidth, coined by the Graph Minors project~\citep{RobertsonS84},
has found many applications both in graph theory and algorithm design.
(The definition of tree decompositions and treewidth can be found in \cref{sec:prelims}.)

The notion of a \emph{bramble} is an elegant and tight obstacle to treewidth.
Given an undirected graph $G$, a bramble $\mathcal{B}$ is a family of connected subgraphs of $G$
such that for every $H_1,H_2 \in \mathcal{B}$, either $V(H_1) \cap V(H_2) \neq \emptyset$
or there is an edge of $G$ with one endpoint in $V(H_1)$ and one endpoint in~$V(H_2)$. 
The measure of complexity of a bramble is its order: a \emph{hitting set} of a bramble $\mathcal{B}$
is a set $X \subseteq V(G)$ such that $X \cap V(H) \neq \emptyset$ for every $H \in \mathcal{B}$,
and the \emph{order} of a bramble is the minimum size of such a hitting set. 
A simple argument shows that for every bramble $\mathcal{B}$ in $G$, every tree decomposition of $G$
needs to contain a bag hitting $\mathcal{B}$, thus the treewidth of $G$ plus one bounds
the maximum order of a bramble in $G$. 
The beauty of the bramble definition lies in the (highly nontrivial) fact that the above
relation is tight: There is always a bramble in $G$ of order equal to the treewidth of $G$ plus one~\citep{SeymourT93}.

However, while treewidth has found numerous applications in algorithm design, 
the use of brambles in algorithms is scarce. 
The main reason for that lies in the result of~\cite{GroheM09}:
While a bramble provides a dual object tightly related to treewidth, it can be of 
size exponential in the graph.
In particular, for every $\delta \in (0,1]$, in any $n$-vertex constant-degree expander
the treewidth is $\Omega(n)$, but any bramble of order $\Omega(n^{1/2+\delta})$
has size\footnote{\cite{GroheM09} formally only proved a $\Omega(n^{\delta})$
	lower bound, but a slightly more careful analysis of their calculations shows a
	lower bound of $\Omega(n^{2\delta})$ in the exponent.} exponential in $\Omega(n^{2\delta})$.
Hence, to certify that the treewidth is larger than $n^{1/2+\delta}$, one is required 
to look at exponential-size brambles.

On the positive side, \cite{GroheM09} proved that every $n$-vertex
graph of treewidth~$k$ admits a bramble of order $\Omega(\sqrt{k}/\log^2 k)$
and size $\Oh(k^{3/2} \log n)$. 
Combining this with the result of \cite{ChekuriC15} stating
that every graph $G$ of treewidth $k$ admits a topological minor of maximum degree
$3$, $\Oh(k^4)$ vertices, and treewidth at least $k/p(\log k)$ for some polynomial $p$, 
one obtains that a graph of treewidth $k$ admits a bramble of order $\widetilde{\Omega}(\sqrt{k})$
and size $\widetilde{\Oh}(k^{3/2})$. Here, $\widetilde{\Omega}(\cdot)$ and $\widetilde{\Oh}(\cdot)$
omit polylogarithmic divisors and factors, respectively. 

\medskip

In this note, we provide the following two strengthenings and tightenings of the results
of Grohe and~Marx. 

First, we improve the positive result with respect to the bramble size to brambles of congestion $2$.
A bramble $\mathcal{B}$ in a graph $G$ is of \emph{congestion} $c$ if every vertex of $G$
is in at most $c$ elements of $\mathcal{B}$.
Clearly, the order of a bramble~$\mathcal{B}$ of congestion $c$ is at least $|\mathcal{B}|/c$,
so, in brambles of constant congestion, the size and order are within a constant multiplicative
factor of each other. 
A bramble of congestion $1$ implies a clique minor of the same size.
Thus large grids show that it is possible to have arbitrarily
large treewidth without having a bramble of congestion $1$ and size $5$.
In contrast, our strengthening of the results of \cite{GroheM09} shows that 
there is always a bramble of order $\widetilde{\Omega}(\sqrt{k})$ and congestion $2$.
\begin{restatable}{theorem}{firstresult}
	\label{thm:sqrt}
	There exists a polynomial $p(\cdot)$ such that for every positive integer $k$
	and every graph $G$ of treewidth at least $k$ 
	the graph $G$ contains a bramble of order at least $\sqrt{k}/p(\log k)$ and congestion~$2$.
\end{restatable}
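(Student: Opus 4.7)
The plan is to reduce, via the Chekuri--Chuzhoy topological-minor theorem, to the case of a subcubic host graph, and then to build inside it a ``grid-crosses'' bramble whose congestion is forced to be at most $2$ because its rows and columns form two vertex-disjoint families. Concretely, applying Chekuri--Chuzhoy to $G$ yields a subdivision inside $G$ of some subcubic graph $H$ of treewidth at least $k' := k/p_1(\log k)$. Any bramble of congestion $c$ found in this subdivision transfers to $G$ with congestion $c$, because the branch paths of a subdivision are internally vertex-disjoint and their internal vertices have degree two. Thus it suffices to construct the claimed bramble inside $H$.

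Inside the subcubic graph $H$ I would extract, starting from a well-linked set of size $\Omega(k')$, two families of connected subgraphs: rows $R_1, \dots, R_r$ that are pairwise vertex-disjoint, and columns $C_1, \dots, C_r$ that are pairwise vertex-disjoint, with $r = \widetilde{\Omega}(\sqrt{k'}) = \widetilde{\Omega}(\sqrt{k})$, arranged so that $V(R_i) \cap V(C_j) \neq \emptyset$ for every $i, j$. The bramble elements are then the ``crosses'' $B_i := R_i \cup C_i$. Each $B_i$ is connected since $R_i$ and $C_i$ share a vertex, and any two crosses $B_i, B_j$ intersect at any vertex of $V(R_i) \cap V(C_j)$, so $\{B_i\}$ is a valid bramble. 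A vertex of $H$ belongs to at most one row and at most one column, hence to at most two crosses, giving congestion $2$; since every vertex hits at most two crosses, a hitting set needs size at least $r/2 = \widetilde{\Omega}(\sqrt{k})$, the required order.

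The main obstacle will be the extraction in the second step: producing $r = \widetilde{\Omega}(\sqrt{k'})$ rows and columns that are pairwise disjoint within each family but pairwise crossing across families. This is weaker than an $r \times r$ grid minor (no linear ordering or planar embedding is required, and crossings may involve many vertices), but $r$ still has to match the Grohe--Marx scale. The Grohe--Marx argument routes paths between pairs in a random subset of a well-linked set, producing a bramble of the right order whose elements typically overlap heavily; the goal here is to reshape those routings into two \emph{orthogonal} vertex-disjoint partial partitions of $V(H)$. I would attempt this by first iteratively pulling out $r$ vertex-disjoint connected row subgraphs that together span a large well-linked subset, and then, on the remaining graph, routing $r$ column subgraphs that are forced by well-linkedness to meet every row; subcubicity is what bounds the competition for individual vertices during the flow rounding, so that each extraction step loses only polylogarithmic factors.
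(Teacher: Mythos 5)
Your final combinatorial step is sound: two vertex-disjoint families of connected ``rows'' and ``columns'' with every row meeting every column do yield a bramble of congestion $2$ and order at least $r/2$, and this ``two disjoint families'' idea is in fact the same mechanism that gives congestion $2$ in the paper's proof (there the two families are the $h$ horizontal paths $P_\alpha$ of a path-of-sets system and the pairwise disjoint connector paths $Q(e)$). The problem is that the step you yourself identify as the main obstacle --- extracting $r=\widetilde{\Omega}(\sqrt{k})$ pairwise disjoint rows and $r$ pairwise disjoint columns with a \emph{complete} cross-intersection pattern --- is the entire content of the theorem, and your sketch for it does not work. Well-linkedness of a set $X$ only guarantees linkages between equal-size subsets of $X$; it gives you no handle to force a single connected subgraph (a column) to pass through $r$ prescribed, pairwise disjoint regions (the rows). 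Nothing in the Grohe--Marx routing argument produces such a constraint either: their sampled paths meet each other typically, not certifiably, and certainly not all-against-all. Moreover, the natural way to realize your structure is via a path-of-sets system of width and length both $\widetilde{\Omega}(\sqrt{k})$ (rows $=$ the horizontal paths, columns $=$ the connected clusters $S_i$), and that is precisely the configuration from which Chekuri and Chuzhoy derive grid minors; demanding it at the $\sqrt{k}$ scale amounts to the optimal excluded-grid bound, which is open, and the available tradeoff (\cref{thm:path-of-sets}) only gives length polylogarithmic in $k$ when the width is $\widetilde{\Omega}(k)$. So the object you plan to build is not known to exist at the scale you need, and your extraction argument does not establish it.

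The paper avoids this by not insisting on a complete row-versus-column intersection pattern. It takes a strong path-of-sets system of width $h=\widetilde{\Omega}(k)$ and length $r=O((\log k)^2)$, runs the cut-matching game of Khandekar--Rao--Vazirani on the $h$ horizontal paths (embedding each round's matching as a linkage inside a different cluster $S_i$, which is what keeps all connectors pairwise disjoint), obtains a bounded-degree expander $H$ on $h$ vertices, and then invokes the Kawarabayashi--Reed separator theorem to extract a clique minor of size $\widetilde{\Omega}(\sqrt{h})$ in $H$. Each bramble element is the union of \emph{many} horizontal paths (one per expander vertex in a clique branch set) together with the connectors realizing a spanning tree of that branch set and the clique edges; two such elements are adjacent because the branch sets are adjacent in $H$, and congestion $2$ holds because every vertex of $G$ lies on at most one horizontal path and at most one connector. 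If you want to salvage your write-up, the fix is to replace the thin one-row-one-column crosses by these ``fat'' elements and to supply the expander-plus-clique-minor machinery that certifies their pairwise adjacency; also note that the reduction to a subcubic topological minor, while harmless, is not needed for this theorem.
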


On a high level, the proof of \cref{thm:sqrt} follows similar lines as the construction
of treewidth sparsifiers by~\cite{ChekuriC15}.
A graph of the required treewidth $k$ contains a large so-called strong path-of-sets system,
as shown by~\cite{Chekuri2016}.
% Intuitively, such a system consists of a long sequence~$S_1, \ldots, S_r$ of pairwise disjoint vertex subsets, such that, for each $S_i$, any choice of terminals in $S_i$ can be internally linked by disjoint paths, and for each pair of consecutive sets $S_i$, $S_{i + 1}$ there is a large set of disjoint paths that link them.
From such a system we can build an auxiliary graph whose vertices represent long paths which can be arbitrarily interlinked by pairwise disjoint paths.
On the vertex set of this auxiliary graph, we can play what is called the cut-matching game.
By a result from~\cite{khandekar2009graph} on this game there is a strategy to construct an expander subgraph of the auxiliary graph within $\BigO{\Brace{\log{k}}^2}$ rounds of adding perfect matchings.
We then transfer this back to the path-of-sets system by embedding each round in a different set to obtain the expander as something similar to a minor (models of vertices might intersect, but only twice).
A large enough expander contains a large clique as a minor; this was shown by~\cite{kawarabayashi2010separator}.
The minor models of the clique vertices then provide the desired bramble.

Second, we provide a tight matching bound (up to polylogarithmic factors) to 
the Grohe-Marx lower bound on the size of a bramble.
\begin{restatable}{theorem}{secondresult}
	\label{thm:lb}
	There exists a polynomial $p(\cdot)$ such that 
	for every constant $\delta \in (0, 0.5]$ and every integer~$k$,
	every graph $G$ of treewidth at least $k$
	contains a bramble of order at least
	$k^{0.5+\delta}/p(\log k)$ 
	and with at most $2^{k^{2\delta} \cdot p(\log k)}$ elements.
\end{restatable}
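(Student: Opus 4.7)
The plan is to reuse the machinery from the proof of Theorem~\ref{thm:sqrt} up to the point of obtaining the auxiliary expander, and then to replace the final clique-minor extraction by a higher-order bramble construction performed directly inside the expander.

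First, I would invoke the strong path-of-sets system of \cite{Chekuri2016} together with the cut-matching game of \cite{khandekar2009graph}, exactly as in Theorem~\ref{thm:sqrt}, to obtain a constant-degree expander $H$ on $N = \widetilde{\Omega}(k)$ vertices along with an embedding of $H$ into $G$ of polylogarithmic vertex-congestion: each vertex of $H$ is realised as a connected subgraph of $G$ and each edge of $H$ as a connecting path. A bramble in $H$ of order $m$ with $M$ elements then transfers through this embedding to a bramble in $G$ of order at least $m/p(\log k)$ with $M$ elements. It therefore suffices to construct, inside an arbitrary $N$-vertex constant-degree expander, a bramble of order $\widetilde{\Omega}(N^{1/2+\delta})$ with $2^{\widetilde{\Oh}(N^{2\delta})}$ elements.

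For the bramble construction inside the expander, I would set $t = \Theta(N^{2\delta})$ (up to polylogarithmic factors) and produce a family $\{B_\sigma : \sigma \in \{0,1\}^t\}$ of connected subgraphs of $H$, each spanning a constant fraction of $V(H)$. The expansion of $H$ gives the bramble property essentially for free, since any two sets of linear size must either intersect or be joined by an edge. The non-trivial task is to arrange the $B_\sigma$ so that every vertex set $X$ with $|X| < N^{1/2+\delta}/p(\log N)$ misses some $B_\sigma$. A natural construction fixes $t$ disjoint balanced partitions $V_i = V_i^0 \cup V_i^1$ of a large subset of $V(H)$ and defines $B_\sigma$ to be the connected hull (for instance, via a flow or spanning structure guaranteed by $H$'s expansion) of $\bigcup_i V_i^{\sigma_i}$. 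A probabilistic/entropy argument---essentially running the Grohe--Marx lower bound of \cite{GroheM09} in reverse---then shows that a uniformly random $\sigma$ has sufficient probability of avoiding any fixed small set $X$, so that the hitting-set lower bound follows by a union bound over the candidate hitting sets, while the family itself has size $2^t$.

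The main obstacle I foresee is making this trade-off tight, so that the resulting order and size simultaneously match $\widetilde{\Omega}(N^{1/2+\delta})$ and $2^{\widetilde{\Oh}(N^{2\delta})}$. The connected-hull step has to be calibrated so that each $B_\sigma$ is large enough to enforce pairwise touching yet not so large that it becomes trivial to hit; losing even a $\sqrt{N}$-factor in either direction would convert into a suboptimal order or super-exponential size. Achieving the correct calibration should be possible by a quantitative use of the expansion parameters of $H$ (edge expansion, mixing time, spectral gap), but balancing these parameters against the union-bound and ensuring that the polylogarithmic overheads incurred by the path-of-sets/cut-matching reduction do not swamp the exponent is the delicate part.
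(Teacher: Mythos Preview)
Your approach diverges substantially from the paper's, and the bramble construction you sketch inside the expander has a gap that is not merely a matter of calibration.

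The paper does \emph{not} route Theorem~\ref{thm:lb} through a path-of-sets system or an auxiliary expander. Instead it applies the treewidth sparsifier of \cite{ChekuriC15} to reduce to $|V(G)| = \widetilde{\Oh}(k^4)$, and then invokes a concurrent-flow lemma: a well-linked set $W$ of size $\Theta(k)$ together with a unit flow between every ordered pair in $W$ of total vertex-congestion $\Oh(k \log k)$. Each bramble element is a \emph{random closed walk}: sample $\ell = \Theta(k^{0.5+\delta})$ vertices of $W$ uniformly and connect consecutive ones by a path drawn from the flow. The congestion bound makes the probability that any fixed vertex lies on one sampled path $\Oh((\log k)/k)$; the Lov\'asz Local Lemma (consecutive paths share an endpoint, so the events are not independent) then shows a whole walk avoids a fixed set $X$ with $|X| \le \alpha \ell / \log k$ with probability at least $\exp(-\Oh(\alpha k^{2\delta}))$. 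A birthday-type argument shows two independent walks intersect with probability $1 - \exp(-\Omega(k^{2\delta}))$. Sampling $\exp(\Theta(k^{2\delta}))$ independent walks and union-bounding over the at most $2^{\widetilde{\Oh}(k^{0.5+\delta})}$ candidate hitting sets (this is where the sparsifier is used) finishes the proof.

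The gap in your plan is the ``connected hull'' step. Your sets $\bigcup_i V_i^{\sigma_i}$ already have size $\Theta(N)$ before connection, and making them connected inside a bounded-degree expander uses $\Theta(N)$ vertices as well; hence a fixed vertex lies in $B_\sigma$ with probability bounded away from zero. Then the probability that a uniformly random $B_\sigma$ avoids a set $X$ of size $N^{1/2+\delta}$ is at best $\exp(-\Omega(N^{1/2+\delta}))$, not $\exp(-\Oh(N^{2\delta}))$, and no choice of $t$ rescues the union bound against $2^t$ elements. The paper's walks are the opposite regime: each one is ``thin'' in the sense that a fixed vertex is hit with probability only $\widetilde{\Oh}(k^{\delta-1/2})$, and it is precisely this thinness---obtained from the flow-congestion bound, not from expansion per se---that makes the size/order trade-off come out tight. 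If you wish to stay inside your auxiliary expander, the natural fix is to replace the partition scheme by random walks on $H$ of length $\Theta(N^{1/2+\delta})$ (using mixing in place of the concurrent flow); that would essentially reproduce the paper's argument in a special case rather than give a genuinely different proof.
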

Here, the construction follows the general ideas of the construction of~\cite{GroheM09} of the bramble of order $\Omega(\sqrt{k}/\log^2 k)$ and size $\Oh(k^{3/2} \log n)$,
but with different parameters and more elaborate probabilistic analysis. 

\medskip

After introducing notation and toolbox from previous works in \cref{sec:prelims},
we prove \cref{thm:sqrt} in \cref{sec:sqrt}
and \cref{thm:lb} in \cref{sec:lb}.

\section{Preliminaries}\label{sec:prelims}

\subsection{Notation and basic definitions}
For each $n \in \mathbb{N}$ we use $[n]$ to denote $\{1, \ldots, n\}$.
Let $G$ be a graph.
A graph $H$ is a \emph{minor} of $G$ if it can be obtained from $G$ by a series of edge deletions, vertex deletions, and edge contractions.
We can also consider a minor to be a map $f: \V{H} \to 2^{\V{G}}$ such that $\Fkt{f}{v}$ is connected for all $v \in \V{H}$, $\Fkt{f}{u} \cap \Fkt{f}{v} = \emptyset$ for $u \neq v$ and if $uv \in \E{H}$ then there are $u' \in \Fkt{f}{u}$ and $v' \in \Fkt{f}{v}$ with $u'v' \in \E{G}$.
The map $f$ is called a \emph{model} of $H$ in $G$ and we refer to $\Fkt{f}{v}$ as the \emph{model of vertex $v$} for all $v \in \V{H}$.
A \emph{subdivision} of a graph $H$ is a graph that can be obtained from $H$ by a series of edge subdivisions, that is, replacing an edge $uv$ by a new vertex $w$ and two new edges $uw$ and $wv$.
A graph $H$ is a \emph{topological minor} of $G$ if a subdivision of $H$ is isomorphic to a subgraph of~$G$.

Though we avoid working with the definition directly, we also define the treewidth of $G$.
A \emph{tree-decomposition} of $G$ is a tuple $\Brace{T,\beta}$ where $T$ is a tree and $\beta: \V{T} \to 2^{\V{G}}$ a map of the tree vertices to subsets of vertices in $G$ called \emph{bags}.
The map $\beta$ has to have the following properties:
\begin{enumerate}
	\item Every vertex $v \in \V{G}$ occurs in some bag.
	\item If $v \in \Fkt{\beta}{t} \cap \Fkt{\beta}{t'}$ for $t \neq t'$, then $v \in \Fkt{\beta}{t''}$ for all $t''$ lying on the unique path between $t$ and $t'$ in $T$.
	\item For every $uv \in \E{G}$ there is a $t\in \V{T}$ such that $u,v \in \Fkt{\beta}{t}$.
\end{enumerate}
The \emph{width} of $\Brace{T,\beta}$ is defined as $\max\Set{\Abs{\Fkt{\beta}{t}} - 1 \mid t \in \V{T}}$ and the \emph{treewidth} of $G$ is defined to be the minimum width of a tree-decomposition of $G$.

A \emph{linkage} $\mathcal{L}$ in $G$ is a set of vertex-disjoint paths.
We say it is an $A$-$B$-linkage for $A,B \subseteq \V{G}$, if all its paths start in $A$ and end in $B$, are otherwise disjoint from $A$ and $B$ and $\Abs{\mathcal{L}} = \Abs{A} = \Abs{B}$.

\begin{definition}[well-linked]
	A set of vertices $X$ in a graph $G$ is \emph{well-linked} if for all $A,B \subseteq X$ with $\Abs{A}=\Abs{B}$ there is an $A$-$B$-linkage in $G-\Brace{X\setminus \Brace{A \cup B}}$.
\end{definition}

\begin{definition}[bramble]
	A \emph{bramble} $\mathcal{B}$ in $G$ is a collection of connected subgraphs $B_1,\dots,B_s$ such that for any two elements $B_i,B_j$ we have $\V{B_i} \cap \V{B_j} \neq \emptyset$ or there exists $e=uv \in \E{G}$ with $u \in \V{B_i}$ and $v \in \V{B_j}$.
	The bramble $\mathcal{B}$ is of \emph{size} $s$.
	A \emph{hitting set} of $\mathcal{B}$ is a vertex subset $H \subseteq \V{G}$ such that for all $i \in [s]$ we have $H \cap \V{B_i} \neq \emptyset$.
	The \emph{order} of $\mathcal{B}$ is the minimum size of a hitting set of $\mathcal{B}$, i.e.\@ $\min\Set{\Abs{H}\mid H \subseteq \V{G} \text{ such that } \forall i \in [s] \colon H \cap B_i \neq \emptyset}$.
	The \emph{congestion} of bramble $\mathcal{B}$ is the maximum, taken over all vertices $v \in V(G)$, of the number of elements that contain~$v$, i.e.\@ $\max_{v \in \V{G}} \Abs{\Set{B_i \mid v \in \V{B_i}}}$.
\end{definition}

\begin{definition}[expander]
	A graph $G$ is an \emph{$\alpha$-expander} if for every partition $\Brace{S,S'}$ of the vertex set with $S,S' \neq \emptyset$ we have
	\begin{align*}
		\frac{\Abs{\E{S,S'}}}{\min\Set{\Abs{S},\Abs{S'}}} \geq \alpha,
	\end{align*}
	where $\E{S,S'}$ is the set of edges in $G$ that have one endpoint in $S$ and the other in $S'$.
\end{definition}

\subsection{Treewidth sparsifiers}

We use the following treewidth sparsification result of~\cite{ChekuriC15}.
\begin{theorem}[\cite{ChekuriC15}, Theorem 1.1]\label{thm:tw-sparsifier}
	There exists a polynomial $p(\cdot)$ such that
	for every integer~$k$, every graph~$G$ of treewidth at least $k$
	contains a topological minor with 
	(i)~$\Oh(k^4)$~vertices, (ii)~maximum degree $3$, and (iii) treewidth
	at least $k / p(\log k)$. 
\end{theorem}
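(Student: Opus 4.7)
The plan is to derive the topological minor from a large well-linked set, in the spirit of the authors' own use of well-linkedness for \cref{thm:sqrt}. Since $G$ has treewidth at least $k$, standard LP-duality arguments for treewidth (or equivalently the polynomial grid-minor theorem) produce a vertex subset $T \subseteq V(G)$ of size $\Omega(k / \mathrm{polylog}(k))$ that is well-linked in $G$. The aim is to realise, as a topological minor on terminals inside $T$, a subcubic graph $H$ of treewidth $\widetilde{\Omega}(k)$.

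As scaffolding I would first extract a \emph{path-of-sets system}: a sequence of vertex-disjoint subgraphs $V_1,\ldots,V_\ell$ of $G$, with $\ell=\Theta(\mathrm{polylog}(k))$, each carrying its own well-linked terminal set $T_i$ of size roughly $k/\ell$, together with a $T_i$--$T_{i+1}$ linkage of the same size between consecutive pieces that is vertex-disjoint from the other $V_j$. The existence of such a system in any graph of treewidth $\Omega(k)$ is standard in the treewidth and disjoint-paths literature and follows from iteratively peeling off well-linked chunks from $G$.

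Inside each $V_i$ I would play the cut-matching game of Khandekar, Rao, and Vazirani on the terminals $T_i$, exactly as in the sketch after \cref{thm:sqrt}: every round's perfect matching is realised as a linkage via the well-linkedness of $T_i$, and after $O(\log^2 k)$ rounds the induced graph on $T_i$ is a constant-degree expander realised as a topological minor of $V_i$. Chaining these expanders through the inter-$V_i$ linkages gives a subcubic topological minor of $G$ whose treewidth is linear in the size of a single expander, hence $\widetilde{\Omega}(k)$. The vertex count is dominated by the routed paths: at most $\mathrm{polylog}(k)$ rounds, each using $O(k)$ paths of length at most $O(k^2)$ inside the router, sum to $O(k^3 \cdot \mathrm{polylog}(k))$, well below the $O(k^4)$ budget.

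The main obstacle is driving the maximum degree down to exactly $3$ rather than merely a constant. The cut-matching routings within a single $V_i$ naturally want to share internal vertices, so the construction directly yields a low-congestion minor rather than a genuine topological minor, and one must argue that branch vertices of high degree can be replaced by small cubic ``switches'' whose existence is guaranteed by local well-linkedness, without corrupting connectivity or pushing the vertex count past $O(k^4)$. This degree-reduction step is the technical heart of Chekuri and Chuzhoy's construction and is precisely where the $k^4$ in the vertex bound is spent.
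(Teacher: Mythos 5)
First, a point of order: the paper does not prove \cref{thm:tw-sparsifier} at all. It is imported verbatim as Theorem~1.1 of Chekuri and Chuzhoy's treewidth-sparsifier paper and used as a black box, so there is no in-paper proof to compare against; your text is a reconstruction of the cited argument and must stand on its own. It does not.

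The decisive gap is the one you name yourself in your final paragraph. Converting the low-congestion embedding of the cut-matching expander into a genuine \emph{topological} minor of maximum degree $3$ is not a finishing touch that can be dispatched by appealing to ``small cubic switches guaranteed by local well-linkedness''; it is essentially the entire content of the theorem, since a congested embedding of an expander already certifies large treewidth and the whole point of the statement is the degree bound. A proof that ends by observing that the hard step is ``precisely where the $k^4$ is spent'' has deferred the argument rather than supplied it. Worse, your deployment of the path-of-sets system aggravates the problem: you propose to play all $O(\log^2 k)$ rounds of the cut-matching game inside a single cluster $V_i$, which is exactly what forces the linkages of distinct rounds to share vertices. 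The remedy used by Chekuri and Chuzhoy, and reused in this paper's own proof of \cref{thm:sqrt}, is the opposite arrangement: take a path-of-sets system whose \emph{length} is $\Theta(\log^2 k)$ (one cluster per round of the game), thread $h$ pairwise disjoint paths $P_1,\ldots,P_h$ through all clusters to serve as the vertices of the game, and embed the $i$-th round's matching as a linkage inside cluster $S_i$ alone. Different rounds are then vertex-disjoint by construction, and every host vertex lies on at most one $P_\alpha$ and one linkage path, which is what makes a constant degree bound reachable at all. Two further inaccuracies: after $O(\log^2 k)$ rounds of adding perfect matchings the resulting expander has degree $O(\log^2 k)$, not constant degree as you assert; and your vertex count of $O(k^3\,\mathrm{polylog}(k))$ assumes routed paths of length $O(k^2)$ with no justification --- some a priori control on $|V(G)|$ is needed before path lengths can be bounded, and obtaining it without circularly invoking the sparsifier is part of the work.
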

\Cref{thm:tw-sparsifier} asserts that, for the cost of some polylogarithmic factors on the treewidth we can assume that the considered graph has maximum degree $3$ and size polynomial in the treewidth when deriving a bramble from a given graph.

\subsection{The cut-matching game}

The \emph{cut-matching game} is a two-player game.
The two players are called the \emph{cut} player and the \emph{matching} player.
They play in turns on an even-size set $V$ of vertices, building a (multi)graph $H$ with $V(H)=V$.
Initially the graph~$H$ has no edges.

In each turn, the cut player chooses a partition $\Brace{A,B}$ of $V$ into two equal-size sets.
Then the matching player chooses a matching $M$ between $A$ and $B$.
The matching is added to $H$ (with multiplicities, i.e., if one of the edges of $M$ is already
present in $H$, an additional copy is added, increasing the multiplicity). 
If the graph $H$ is an $\Fkt{\Omega}{1}$-expander at the end of a round the game ends.
If the game ends we say it \emph{yields} the new graph $H$.
We can consider the graph $H$ as \emph{consisting} of the matchings~$M_1,\dots,M_r$ chosen throughout the $r$ rounds of the game.

\begin{theorem}[\cite{khandekar2009graph}, Lemma 3.1 and 3.2]\label{thm:cut-matching}
	There is a strategy for the cut player that, with high probability, yields an $\Fkt{\Omega}{1}$-expander after $\BigO{\Brace{\log h}^2}$ rounds, where $h = |V|$.
\end{theorem}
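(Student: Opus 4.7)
The plan is to equip the cut player with a strategy driven by a potential function that tracks how close to uniform the random walk on the current graph $H$ has become. To each vertex $v \in V$ I associate a probability distribution $p_v$ on $V$, initialized to be concentrated on $v$ itself. Whenever the matching player adds an edge $uv$ to $H$, I update both $p_u$ and $p_v$ to their average $(p_u + p_v)/2$, mirroring one step of a symmetric random walk along the new matching edge. The global potential is then $\Phi := \sum_{v \in V} \|p_v - \pi\|_2^2$, where $\pi$ is the uniform distribution on $V$; $\Phi = 0$ forces every $p_v$ to equal $\pi$, and low values of $\Phi$ are a quantitative proxy for rapid mixing of the random walk on $H$.

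The cut player's strategy at each round is to sample a random direction $r$ (for instance, a standard Gaussian vector in $\mathbb{R}^V$), compute the scalars $\langle r, p_v \rangle$, and declare $A$ to be the top half and $B$ the bottom half of $V$ under this ordering. Whatever matching $M$ the matching player returns, each averaging step $p_u, p_v \mapsto (p_u + p_v)/2$ zeroes out the difference $p_u - p_v$ along $r$ and contracts it along many other directions. Combining this deterministic contraction with Gaussian anti-concentration across the gap between $A$ and $B$, the expected one-round drop of $\Phi$ is at least $\Omega(\Phi / \log h)$. Since $\Phi \le h$ at the start, after $O((\log h)^2)$ rounds the potential falls below any desired inverse-polynomial threshold with high probability.

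To close the argument, I would show that small $\Phi$ forces $H$ to be an $\Omega(1)$-expander. Suppose, for contradiction, that $H$ admits a cut $(S, S')$ with sparsity $o(1)$. Then, since the update rule only moves mass across matching edges, only an $o(1)$ fraction of the mass starting in $S$ could have escaped to $S'$ across all rounds, leaving each $p_v$ for $v \in S$ concentrated on $S$. This would force $\Phi$ to stay bounded away from $0$ by an amount depending only on $|S|/h$ and on the sparsity of the cut; contradicting the upper bound on $\Phi$ then yields an explicit constant expansion guarantee.

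The main obstacle I expect is the per-round analysis: proving that, against an arbitrary matching chosen adaptively by the adversary, the random-projection cut forces $\Phi$ to drop by the claimed $\Omega(1/\log h)$ factor. This requires combining a deterministic contraction along the sampled direction $r$, a balance property of $A$ versus $B$ along $r$ coming from Gaussian anti-concentration, and a union-bound / integration argument that transfers the guarantee from the single direction $r$ to the full vectors $p_v - \pi$. The extra logarithmic factor beyond the naive $O(\log h)$ bound arises precisely from this anti-concentration step, and it is what drives the final $O((\log h)^2)$ round count.
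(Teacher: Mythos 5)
This theorem is not proved in the paper at all: it is imported verbatim from Khandekar, Rao and Vazirani (Lemmas~3.1 and~3.2 of the cited work), so there is no internal proof to compare against. What you have written is a faithful outline of exactly the argument in that source: per-vertex distributions $p_v$ updated by averaging along matching edges, the potential $\Phi=\sum_{v}\|p_v-\pi\|_2^2$ with $\Phi\le h$ initially, the random-projection cut at the median, a multiplicative drop of $1-\Omega(1/\log h)$ per round, and the certification step in which a small final $\Phi$ forces every cut $(S,S')$ to be crossed by $\Omega(\min\{|S|,|S'|\})$ matching edges, because a constant fraction of the mass that starts in the smaller side must migrate across and each matching edge transports at most one unit of mass in total. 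So the approach is the right one, and the closing contradiction argument is essentially KRV's.

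As a proof, however, the text has a hole exactly where the theorem lives: the per-round bound $\mathbb{E}[\Delta\Phi]\ge\Omega(\Phi/\log h)$ against an adaptively chosen matching is asserted and then explicitly deferred (``the main obstacle I expect''). That bound is the entire content of the cited Lemma~3.1; everything else (telescoping the drop, the $\ell_1/\ell_2$ conversion in the expander certification, and the Markov-plus-monotonicity step needed to turn an expected drop into the ``with high probability'' statement, which you also leave implicit) is routine. One smaller inaccuracy: averaging $p_u$ and $p_v$ does not merely ``zero out the difference along $r$'' --- it makes the two distributions equal and decreases $\Phi$ by exactly $\|p_u-p_v\|_2^2/2$; the sampled direction $r$ serves only to guarantee that matched pairs have a large projected gap, which is then transferred to a lower bound on $\sum_{uv\in M}\|p_u-p_v\|_2^2$ at the cost of the extra $\log h$ factor. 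Filling in that per-round lemma and the concentration step would make this a complete proof, but as it stands the crux is named rather than proven.
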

%\todo[inline]{ms: I didn't find this guarantee on the expansion in \cite{khandekar2009graph}. They remark that they get expansion 1/2. Can you just arbitrarily increase the expansion by continuing the game? Anyway: we don't seem to need that high an expansion, some constant should be enough, or not?}
% Yeah, constant expansion is OK here. 

A typical application for the cut-matching game is as follows. 
Let $G$ be a graph and let $X \subseteq V(G)$ be a well-linked set of even size $h = |X|$.
Consider a cut-matching game played on $V=X$.
The matching player is simulated by a flow computation:
For every partition of $X$
into $A$ and~$B$, the graph $G$ contains an $A$-$B$-linkage $\mathcal{P}$ by well-linkedness of~$X$.
The returned matching of the matching player corresponds to how the paths of $\mathcal{P}$ match the vertices
of $X$. 
Then, if the cut player plays the strategy of~\cite{khandekar2009graph}, after $\Oh((\log h)^2)$
rounds it obtains an expander; note that this expander has maximum degree $\Oh((\log h)^2)$
and can be embedded (with congestion) into $G$ as a union of $\Oh((\log h)^2)$ linkages.

The concept of path-of-sets systems introduced in the following subsection can be used to ensure that the linkages are disjoint and thus, to the price of constant congestion elsewhere, allow us to ged rid of the congestion caused when embedding the expander into $G$.

\subsection{Path-of-sets systems}

The following definition was introduced and used by~\cite{Chekuri2016} to prove a polynomial bound for the excluded-grid theorem.
We use it here in conjunction with a cut-matching game to obtain an expander graph.

\begin{definition}[path-of-sets system]
	Let $G$ be a graph.
	A \emph{path-of-sets system} of \emph{width} $h$ and \emph{length} $r$ (also called \emph{$\Brace{h,r}$-path-of-sets system}) in $G$ is a tuple $\Brace{\mathcal{S},\mathcal{A},\mathcal{B},\mathcal{P}}$ consisting of three sequences of pairwise disjoint vertex sets $\mathcal{S} = S_1, \dots, S_r$, $\mathcal{A} = A_1, \dots, A_r$ and $\mathcal{B} = B_1, \dots, B_r$, and a sequence of linkages $\mathcal{P} = \mathcal{P}_1,\dots,\mathcal{P}_{r-1}$ whose paths are pairwise disjoint such that
	\begin{enumerate}
		\item for all $i \in [r]$ the graph $\InducedSubgraph{G}{S_i}$ is connected,
		\item for all $i \in [r]$ we have $A_i \subseteq S_i$, $B_i \subseteq S_i$, $\Abs{A_i}=\Abs{B_i}=h$, and $A_i \cap B_i = \emptyset$, as well as for all $A \subseteq A_i$ and $B \subseteq B_i$ of same size there is an $A$-$B$-linkage within $\InducedSubgraph{G}{S_i}$,
		\item for all $i \in [r-1]$ we have $\mathcal{P}_i$ consists of $h$ disjoint $B_i$-$A_{i+1}$-paths that are internally disjoint to any set of $\mathcal{S}$.
	\end{enumerate}
	
	A path-of-sets system is called \emph{strong}, if for all $i \in [r]$ we have that $A_i$ is well-linked in~$G[S_i]$ and so is~$B_i$.
\end{definition}
We will indeed only use strong path-of-sets systems.
%Moreover, our proof uses only property three of path-of-sets systems and the well-linkedness of $A_i$ guaranteed by being strong.

Chekuri and Chuzhoy proved that every graph with large enough treewidth contains a path-of-sets system of large length and width.

\begin{theorem}[\cite{Chekuri2016}, Theorem 3.5]\label{thm:chekuri-path-of-sets}
	There exist two constants $c_1, c_2 > 0$ and a polynomial-time randomised algorithm that, given a graph $G$ of treewidth $k$ and integers $w, \ell > 2$, such that $k/\Brace{\log k}^{c_1} > c_2 w \ell^{48}$, with high probability returns a strong path-of-sets system of width $w$ and length $\ell$ in $G$.
\end{theorem}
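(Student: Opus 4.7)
The plan is to extract a strong path-of-sets system by iteratively carving out well-linked clusters from $G$, in the spirit of Chekuri and Chuzhoy's original argument.

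The starting point would be to convert treewidth into a large single well-linked set. A standard lemma (provable by taking a maximum-size bag of a tree decomposition of $G$ and applying Robertson--Seymour-style well-linked decomposition) yields a well-linked set $T \subseteq V(G)$ of size $\widetilde{\Omega}(k)$. Under the hypothesis $k / (\log k)^{c_1} > c_2 w \ell^{48}$ we may arrange $|T| \geq C w \ell^{48}$ for a sufficiently large absolute constant $C$.

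Next I would build the clusters $S_1, \dots, S_\ell$ one at a time. At iteration $i$, given a surviving terminal subset $T_i \subseteq T$ that is well-linked in the current subgraph $G_i$, order $T_i$ arbitrarily along a notional ``left-to-right'' axis and designate a middle block $M_i \subseteq T_i$ of size $w \cdot \mathrm{polylog}(k)$. Take $X_i$ to be a minimum vertex cut in $G_i$ separating the terminals to the left of $M_i$ from those to the right; let $S_i$ be the side of $G_i - X_i$ containing $M_i$, together with $X_i$. Inside $S_i$ apply a second well-linked decomposition to extract from $M_i \cup X_i$ two subsets $A_i$ and $B_i$ of size exactly $w$ that are well-linked in $G[S_i]$, with $A_i$ chosen to face the previous cluster and $B_i$ the next one. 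The inter-cluster linkages $\mathcal{P}_i$ are then produced by a flow argument: by global well-linkedness of $T$ there exist $w$ vertex-disjoint $B_i$-$A_{i+1}$-paths in $G$, and by choosing $X_i$ with some slack one can route these paths through the ``corridor'' region between $S_i$ and $S_{i+1}$, avoiding the interiors of all other clusters. Finally $G_{i+1} := G_i - S_i$ and $T_{i+1} := T_i \setminus S_i$; a submodularity check confirms that $T_{i+1}$ remains well-linked in $G_{i+1}$ up to a polylog penalty.

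The main obstacle is the quantitative accounting of how well-linkedness degrades across iterations. Each well-linked decomposition loses a polylog factor; each balanced-cut extraction can further shrink the effective well-linkedness of the residual terminals; and pairwise disjointness of the $\ell-1$ linkages requires that the ``corridor'' graph still supports the required flows after all clusters are removed. A naive compounding of these losses would give an $\ell^{O(\log \ell)}$-type requirement, too weak to be useful. Chekuri and Chuzhoy's contribution is a careful amortisation that relies on the \emph{global} well-linkedness of $T$ throughout, rather than repeatedly regenerating local well-linkedness, together with a grouping technique that converts fractional flows into integral linkages without accumulating congestion. This amortisation is the technical heart of the proof and is what allows the bound to take the polynomial form $k/(\log k)^{c_1} > c_2 w \ell^{48}$; the exponent $48$ is an artefact of how many times each terminal participates in a cut, and a coarser analysis would still yield a polynomial bound but with a larger constant exponent.
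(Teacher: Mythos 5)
This statement is not proved in the paper at all: it is quoted verbatim as Theorem~3.5 of \cite{Chekuri2016} and used as a black box (the paper's only original contribution here is the derandomised existential reformulation in the corollary that follows). So the honest options were either to cite the result, as the paper does, or to actually reprove it. Your proposal does neither. It is a narrative outline whose final paragraph explicitly concedes that the ``technical heart'' --- the amortisation of well-linkedness losses across iterations and the conversion of fractional flows into disjoint integral linkages --- is ``Chekuri and Chuzhoy's contribution.'' Asserting that they have a careful amortisation is not a proof of the theorem; it is a citation dressed as an argument. None of the quantitative claims (that the per-iteration polylog losses compound to only $\ell^{48}$ rather than $\ell^{O(\log\ell)}$, that the corridor regions still support $w$ disjoint paths after all clusters are carved out, that $T_{i+1}$ remains well-linked in $G_{i+1}$) is established.

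Beyond being incomplete, the architecture you describe does not match how the bound is actually obtained. Chekuri and Chuzhoy do not carve $S_1,\dots,S_\ell$ sequentially left to right with minimum cuts around a ``middle block''; the path-of-sets system is built by starting from a single cluster containing a large well-linked set and repeatedly applying a splitting lemma that doubles the length while losing a polynomial factor in the width, so that after $O(\log\ell)$ doublings the accumulated loss is $\ell^{O(1)}$ --- this is precisely where the exponent $48$ comes from, not from ``how many times each terminal participates in a cut.'' The sequential carving you propose is exactly the scheme that suffers the compounding-loss problem you yourself flag, and you offer no mechanism to escape it. Two smaller points: a maximum-size bag of a tree decomposition is not in general well-linked (the standard fact is that treewidth $\geq k$ yields a well-linked set of size $\Omega(k)$, proved via balanced separators or bramble duality, not by inspecting a bag); and you never address the randomised, polynomial-time algorithmic content of the statement, which is part of what is being claimed. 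As written, the proposal should be replaced by the citation the paper already uses.
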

%\todo[inline]{the reviwers complain that we go from 'with high probability' to 'always' without explanation. How do we emphasise that the probability is an algorithmic thing and not a structural one?
	%\\ms: I would just state it briefly. I put a suggestion below.\\
	%mh: looks good to me, thanks}
Note that the above implies that such a path-of-sets system always exists, because, by the statement of the theorem, there is a non-zero probability that the algorithm successfully produces such a system.
\begin{corollary}
	\label{thm:path-of-sets}
	There exists a polynomial $p(\cdot, \cdot)$ with positive coefficients and a function $\Fkt{f}{h,r} = hr^{48}p(\log h,\log r)$ such that, for all integers $h, r \geq 2$, every graph~$G$ of treewidth at least $\Fkt{f}{h,r}$ contains a strong path-of-sets system of width $h$ and length $r$.
\end{corollary}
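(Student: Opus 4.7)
The plan is to derive the corollary directly from Theorem~\ref{thm:chekuri-path-of-sets}, with essentially no new content beyond solving for the right choice of parameters. Given integers $h, r \geq 2$, I would invoke the Chekuri--Chuzhoy algorithm with $w := h$ and $\ell := r$. Its hypothesis takes the form
\[
  \frac{k}{(\log k)^{c_1}} > c_2 \, h \, r^{48},
\]
so the task reduces to finding a function $f(h,r)$ of the prescribed shape such that this inequality holds for every $k \geq f(h,r)$.

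First I would observe that the function $k \mapsto k/(\log k)^{c_1}$ is monotonically increasing for $k > e^{c_1}$, so it suffices to exhibit $f$ satisfying $f(h,r)/(\log f(h,r))^{c_1} > c_2 h r^{48}$ (the side condition $f(h,r) > e^{c_1}$ is automatic for any sensible choice). Next I would look for $f$ of the required shape $f(h,r) = h r^{48}\, p(\log h, \log r)$. Substituting into $\log f$ and using that $p$ is polynomial in $\log h, \log r$, so that $\log p$ contributes only a lower-order term of size $O(\log\log h + \log\log r)$, one obtains $\log f(h,r) = O(\log h + \log r)$. Hence $(\log f(h,r))^{c_1} = O((\log h + \log r + 1)^{c_1})$, and it suffices to choose
\[
  p(x, y) := C \, (x + y + 1)^{c_1}
\]
for a sufficiently large constant $C$ depending only on $c_1$ and $c_2$. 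Expanded via the binomial theorem, this $p$ is a polynomial in $x$ and $y$ with positive coefficients, as required by the statement of the corollary.

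The existence of the strong path-of-sets system itself is then immediate: Theorem~\ref{thm:chekuri-path-of-sets} guarantees that its randomised algorithm succeeds with positive (indeed high) probability on any input meeting its hypothesis, so at least one such system must exist in $G$. There is no genuine obstacle beyond the parameter bookkeeping above; the potentially worrying self-referential nature of the inequality $k > c_2 h r^{48} (\log k)^{c_1}$ is harmless because the right-hand side depends only logarithmically on $k$, which is why a polynomial in $\log h, \log r$ overhead on top of $h r^{48}$ is enough to absorb it.
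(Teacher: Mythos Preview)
Your approach is essentially identical to the paper's: both pick $p(x,y)$ as a constant times a power of roughly $x+y+\mathrm{const}$ and then verify the hypothesis of Theorem~\ref{thm:chekuri-path-of-sets} at $k=f(h,r)$. The only slip is that $c_1$ need not be an integer, so $(x+y+1)^{c_1}$ is not literally a polynomial and the binomial expansion does not apply; the paper avoids this by using the exponent $c_1' = \lceil c_1 \rceil$ instead, and with that trivial fix your argument is complete (and slightly tidier than the paper's, since you make the monotonicity of $k \mapsto k/(\log k)^{c_1}$ explicit rather than implicit).
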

\begin{proof}
	Let $c_1$, $c_2$ be the constants from \cref{thm:chekuri-path-of-sets} and let $c'_1 = \lceil c_1 \rceil$.
	Pick $p(x, y) = c_2((48 + 2c'_1)(\log(c_3) + x + y))^{c'_1}$, where we specify $c_3 > 1$ later.
	Denote $f = f(h, r)$ and take a graph $G$ of treewidth at least $f$.
	We claim that we may apply the algorithm of \cref{thm:chekuri-path-of-sets} to~$G$, with $w = h$ and $\ell = r$.
	It suffices to show that
	\begin{equation}
		\frac{f}{\Brace{\log f}^{c_1}} > c_2 h r^{48}\text{.}
		\label[equation]{eq:path-of-sets1}
	\end{equation}
	Since $f = hr^{48}p(\log h,\log r)$, we have $c_2 f / p(\log h, \log r) = c_2 h r^{48}$.
	Thus, 
	\begin{equation}
		p(\log h, \log r)/c_2 > (\log f)^{c_1}
		\label{eq:path-of-sets2}
	\end{equation}
	implies \cref{eq:path-of-sets1}.
	To see \cref{eq:path-of-sets2}, observe that the following from bottom to top imply \cref{eq:path-of-sets2}
	\begin{align*}
		((48 + 2 c'_1)\log(c_3 h r))^{c'_1} & > (\log f)^{c_1} \\
		(48 + 2 c'_1)\log(c_3 h r) & > \log f \\
		(c_3 h r)^{48} \cdot (c_3 h r)^{2 c'_1} & > hr^{48} c_2 ((48 + 2 c'_1)(\log(c_3) + \log h + \log r))^{c'_1} \\
		(c_3 h r)^{2c'_1} & > c_2 ((48 + 2 c'_1)(\log(c_3 h r)))^{c'_1} \\
		\frac{(c_3 h r)^{2}}{\log(c_3 h r)} & > (c_2)^{1/c'_1} (48 + 2c'_1) \\ 
		c_3 h r & > (c_2)^{1/c'_1} (48 + 2c'_1) \text{.}
	\end{align*}
	Thus, \cref{eq:path-of-sets1} holds for an appropriate choice of $c_3$, as required.
\end{proof}

This structure is used in the proof of \cref{thm:sqrt} to combine it with the cut-matching game defined above.
Every set $S_i$ is used to embed one of the matchings chosen by the matching player, this way none of them intersect in $G$.
This of course brings up the new problem of them not being on the same vertex set, which can be taken care of with the vertices being represented by paths instead of single vertices.
The whole construction then only has congestion two, as paths and linkages can meet within the $S_i$.

\subsection{Expanders contain cliques}

The reason for us to construct an expander in the given graph is that it is known that expanders do contain large clique minors.
We can use the minor model of a clique to construct a bramble, as seen later, so expanders are closely related to brambles.

\begin{theorem}[\cite{kawarabayashi2010separator}]
	\label{thm:cliques-in-expanders}
	% Every $\alpha$-expander $G$ on $n$ vertices and $\Fkt{\Delta}{G} \leq d$ contains a clique on $\Fkt{f_{t}}{\alpha,n,d} \in \Fkt{\Omega}{\alpha \nicefrac{\sqrt{n}}{d}}$ vertices as a minor.
	There is a constant~$c > 0$ such that every $\alpha$-expander~$G$ on $n$ vertices and maximum degree at most $d$ contains a clique on at least $c \alpha \sqrt{n}/d$ vertices as a minor.
\end{theorem}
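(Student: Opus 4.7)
The plan is to prove the contrapositive: if $G$ is an $\alpha$-expander of maximum degree at most $d$ on $n$ vertices and contains no $K_h$-minor, then $h$ must be $O(\alpha\sqrt{n}/d)$. The strategy is to sandwich the minimum size of a balanced vertex separator of $G$ between a small upper bound coming from $K_h$-minor-freeness and a large lower bound coming from expansion, and then compare the two.

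For the upper bound I would invoke the separator theorem for proper minor-closed classes: every $n$-vertex graph with no $K_h$-minor admits a vertex set $S$ of size at most $Ch\sqrt{n}$ (for some absolute constant $C$) whose removal leaves every connected component with at most $2n/3$ vertices. This is essentially the main content of the work cited as~\cite{kawarabayashi2010separator}; if one preferred not to use it as a black box, the weaker $O(h^{3/2}\sqrt{n})$ bound of Alon--Seymour--Thomas could be substituted, only at the cost of a worse constant in the final clique-minor bound.

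For the lower bound, take any vertex set $S$ such that $G - S$ has a connected component $A$ with $n/3 \le |A| \le 2n/3$. Since every edge from $A$ to $V(G)\setminus A$ must have its other endpoint in $S$, the edge-count is bounded by $|E(A, V(G)\setminus A)| \le d\,|S|$ via the degree bound. On the other hand, $\alpha$-expansion applied to the bipartition $(A, V(G)\setminus A)$ gives $|E(A, V(G)\setminus A)| \ge \alpha\min(|A|, n-|A|) \ge \alpha n/3$. Hence every such balanced separator satisfies $|S| \ge \alpha n/(3d)$.

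Combining, a $K_h$-minor-free $\alpha$-expander of maximum degree $d$ must satisfy $Ch\sqrt{n} \ge \alpha n/(3d)$, and therefore $h \ge \alpha\sqrt{n}/(3Cd)$. Setting $c := 1/(3C)$ yields the clique minor promised by the theorem. I expect the only genuine obstacle to be establishing the separator theorem used in the upper bound; once that is in hand, the expander-side estimate and the final algebra are completely routine.
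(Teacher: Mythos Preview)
Your approach is the same as the paper's: both sandwich the minimum size of a balanced separator between the Kawarabayashi--Reed upper bound $O(h\sqrt{n})$ and an expansion-based lower bound $\Omega(\alpha n/d)$, then compare. There is, however, a small but genuine gap in your lower-bound step. A $2/3$-separator only guarantees that every connected component of $G-S$ has at most $2n/3$ vertices; it does \emph{not} guarantee that some single component has at least $n/3$ vertices (imagine $G-S$ shattering into many tiny pieces). Your sentence ``take any vertex set $S$ such that $G-S$ has a connected component $A$ with $n/3 \le |A| \le 2n/3$'' therefore assumes a property the separator produced by the upper bound need not have, so the two bounds cannot be compared as written.

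The paper's proof patches exactly this point: after first noting that $|S|$ may be assumed small (else the desired bound on $|S|$ holds trivially), it greedily assembles a \emph{union} $W$ of connected components of $G-S$ with $n/4 \le |W| \le n/2$. Since $W$ is still a union of full components, every edge leaving $W$ lands in $S$, and your degree/expansion estimate goes through unchanged with $W$ in place of $A$. With this repair your argument and the paper's coincide.
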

\begin{proof}
	This is a simple corollary of a theorem of~\cite{kawarabayashi2010separator}.
	They showed that there is a constant~$c' > 0$ and a constant $n_0 \in \mathbb{N}$ such that, for every $n, t \in \mathbb{N}$ with $n \geq n_0$, if $H$ is a graph with $n$ vertices that does not contain a clique minor on $t$ vertices, then $H$ has a $2/3$-separator of size at most $c' t \sqrt{n}$ (see \cite{kawarabayashi2010separator}, Theorem 1.2).
	Herein, a $2/3$-separator is a vertex subset $S \subseteq V(H)$ such that each connected component in $H - S$ has size at most $2|V(H)|/3$. 
	
	Now let $G$ be as in \cref{thm:cliques-in-expanders}.
	First, put $c$ small enough so that if $n < n_0$, then $c \alpha \sqrt{n}/d < 1$.
	By Kawarabayashi and Reed's theorem, it now suffices to show that we furthermore may choose the constant $c$ such that $G$ does not have a $2/3$-separator of size at most $c' t \sqrt{n}$ for $t = c \alpha \sqrt{n}/d$.
	Indeed, pick any vertex subset $S \subseteq V(G)$ of size at most $c' t \sqrt{n} = c' c \alpha n/d$.
	If $S$ is a $2/3$-separator, then we claim that we may take the union~$W$ of the vertex sets of some connected components in $G - S$ such that $n/4 \leq |W| \leq n/2$.
	To see that such a union $W$ exists, consider the following.
	If there is a component of size larger than $n/2$, then the other components give the desired~$W$: Since the largest component~$C$ has size at most $2n/3$, the remaining components contain at least $\ell$ vertices, where $\ell = n/3 - |S| \geq n/3 - c' c \alpha n/d$.
	Since $\alpha \leq d$ we have $\ell \geq n/3 - c'c n$.
	By putting $c$ small enough, we have $\ell \geq n/4$.
	Thus the union of the vertex sets of components other than $C$ give the desired~$W$ if there is a component of size larger than~$n/2$.
	Otherwise, if there is a component of size at least $n/4$, then this component gives the desired~$W$.
	Otherwise, iteratively add to the initially empty union~$W$ the smallest components in order of increasing size until their total size exceeds~$n/4$.
	Note that $|W| \leq n/2$ because there is no component of size at least $n/4$. 
	Thus, indeed, we may choose $W$ as a union of connected components in $G - S$ such that $n/4 \leq |W| \leq n/2$.
	
	Since $G$ is an $\alpha$-expander and has maximum degree at most $d$, we then have $|N(W)| \geq \alpha |W|/ d \geq \alpha n/(4 d)$.
	Since $N(W) \subseteq S$, picking any $c$ satisfying $c < 1/(4c')$ thus yields that $S$ is not a $2/3$-separator, as required.
\end{proof}

\subsection{Lov\'{a}sz Local Lemma}

In the analysis in \cref{sec:lb} we will need the Lov\'{a}sz Local Lemma.
The following simplified variant suffices.

\begin{theorem}[See \cite{AlonS04}, Lemma 5.1.1]\label{thm:lll}
	Let $\mathcal{A} = \{A_1, \ldots, A_n\}$ be a finite set of events over some probability space.
	Let $\Delta \in \mathbb{N}$ such that each event in $\mathcal{A}$ is independent of all but at most $\Delta$ other events in $\mathcal{A}$.
	Suppose that there is a real number $x$ with $0 \leq x < 1$ such that for all $i \in [n]$ we have $\mathrm{Pr}(A_i) \leq x \cdot (1 - x)^\Delta$.
	Then $ \mathrm{Pr}(\bigwedge_{i = 1}^n \overline{A_i}) \geq (1 - x)^n$.
\end{theorem}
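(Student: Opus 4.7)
The plan is to reduce the statement to a single inductive claim about conditional probabilities, following the classical Erd\H{o}s--Lov\'{a}sz argument. Specifically, I would first prove the auxiliary claim that for every subset $S \subseteq [n]$ and every index $i \in [n] \setminus S$,
\[
\Pr\!\left(A_i \,\Big|\, \bigwedge_{j \in S} \overline{A_j}\right) \leq x.
\]
Once this is in hand, the theorem follows from the chain rule: writing $\Pr(\bigwedge_{i=1}^n \overline{A_i}) = \prod_{i=1}^n \Pr(\overline{A_i} \mid \bigwedge_{j < i} \overline{A_j})$, each factor is at least $1-x$ by the claim (with $S = \{1,\dots,i-1\}$), so the product is at least $(1-x)^n$.

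The auxiliary claim is proved by induction on $|S|$. The base case $S = \emptyset$ is immediate from the hypothesis, since $\Pr(A_i) \leq x(1-x)^{\Delta} \leq x$. For the inductive step, I would split $S = S_1 \cup S_2$, where $S_1 = \{j \in S : A_j \text{ is dependent on } A_i\}$, and note $|S_1| \leq \Delta$. Set $E = \bigwedge_{j \in S_1} \overline{A_j}$ and $F = \bigwedge_{j \in S_2} \overline{A_j}$. If $S_1 = \emptyset$, then $A_i$ is mutually independent of $F$, so $\Pr(A_i \mid F) = \Pr(A_i) \leq x$. Otherwise, apply Bayes' identity
\[
\Pr(A_i \mid E \wedge F) = \frac{\Pr(A_i \wedge E \mid F)}{\Pr(E \mid F)}
\]
and bound numerator and denominator separately.

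For the numerator, I would drop $E$ and use that $A_i$ is independent of $F$ to obtain $\Pr(A_i \wedge E \mid F) \leq \Pr(A_i \mid F) = \Pr(A_i) \leq x(1-x)^{\Delta}$. For the denominator, enumerate $S_1 = \{j_1,\dots,j_\ell\}$ with $\ell \leq \Delta$ and use the chain rule inside the conditioning:
\[
\Pr(E \mid F) = \prod_{r=1}^{\ell} \Pr\!\left(\overline{A_{j_r}} \,\Big|\, F \wedge \bigwedge_{s<r} \overline{A_{j_s}}\right).
\]
Each conditioning set here is contained in $S_2 \cup \{j_1,\dots,j_{r-1}\} \subseteq S \setminus \{j_r\}$, which is strictly smaller than $S$, so the induction hypothesis applies and yields $\Pr(A_{j_r} \mid \cdot) \leq x$. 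Hence every factor is at least $1-x$ and $\Pr(E \mid F) \geq (1-x)^{\ell} \geq (1-x)^{\Delta}$. Combining the two bounds gives $\Pr(A_i \mid E \wedge F) \leq x$, completing the induction.

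The delicate point, and the place where the argument has to be set up carefully, is the well-foundedness of the induction: the conditioning sets that appear in the chain-rule expansion of the denominator must be verified to be strict subsets of $S$, which is why the decomposition $S = S_1 \cup S_2$ plus the peeling-off of one index at a time is essential. The role of the slack $(1-x)^{\Delta}$ in the hypothesis then becomes transparent --- it cancels exactly the $(1-x)^{\Delta}$ lower bound on the denominator produced by the at most $\Delta$ chain-rule factors, so that the ratio equals $x$ uniformly in $|S_1|$. Everything else is routine manipulation of conditional probabilities.
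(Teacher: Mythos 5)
Your proof is correct and is the classical Erd\H{o}s--Lov\'{a}sz induction, i.e.\ essentially the same argument as in the source the paper cites (Alon and Spencer, Lemma 5.1.1) --- the paper itself does not prove this lemma but imports it as a black box. The one point worth making explicit is that the hypothesis ``independent of all but at most $\Delta$ other events'' must be read as \emph{mutual} independence from the remaining events (independence from every Boolean combination of them), since your step $\mathrm{Pr}(A_i \wedge E \mid F) \leq \mathrm{Pr}(A_i \mid F) = \mathrm{Pr}(A_i)$ genuinely needs it; you invoke it correctly, and that is the intended reading of the statement.
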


\section{Brambles of high order and congestion two}\label{sec:sqrt}

In this section we prove the first result (\cref{thm:sqrt}) of this note, namely that every graph with large enough treewidth contains a bramble of high order and low congestion.

%\begin{theorem}
%	\label{thm:cong-2-bramble}
%	Let $h \in \N$ and $r \coloneqq \Fkt{\Delta}{h}$ as given by \cref{thm:cut-matching}.
%	Every graph of \treewidth at least $\Fkt{f_{p}}{h,r}$ contains a bramble of order $k \coloneqq \nicefrac{\sqrt{h}}{\log h}$ with congestion at most 2.
%\end{theorem}

We recall \cref{thm:sqrt}:
\firstresult*

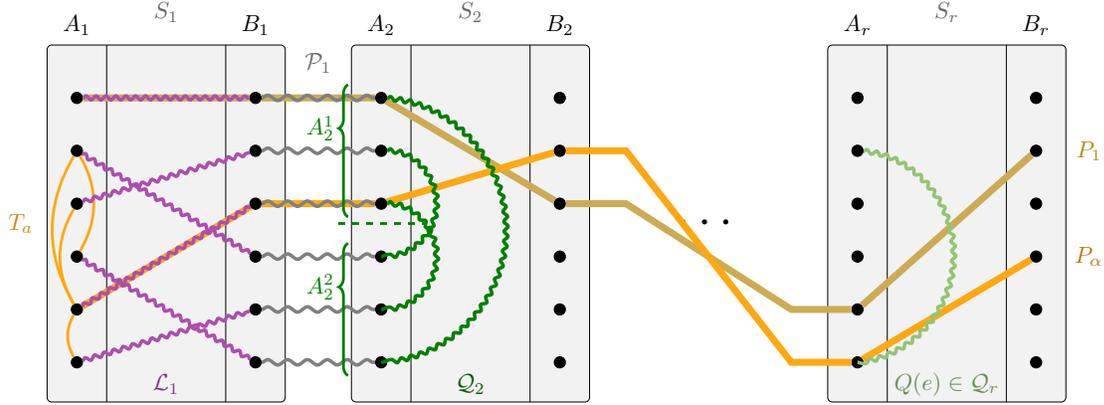
\begin{figure}[t]
	\resizebox{\textwidth}{!}{%
	\begin{tikzpicture}
		\tikzstyle{vertex}=[shape=circle, fill=black, draw, inner sep=.6mm]
		
		\pgfdeclarelayer{background}
		\pgfdeclarelayer{foreground}
		\pgfsetlayers{background,main,foreground}
		
		\def\x{1.8}
		\def\y{\x/2*3}
		
		\begin{pgfonlayer}{background}
			\node (S1) at (0,0) {};
			\node (S2) at ($(S1)+(2*\x+1,0)$) {};
			\node (Sgap) at ($(S2)+(2*\x,0)$) {\huge{$\cdots$}};
			\node (Sr) at ($(Sgap)+(2*\x,0)$) {};
			
			\node (S1label) at ($(S1)+(0,\y+0.5)$) {\textcolor{black!60}{$S_1$}};
			\node (S2label) at ($(S2)+(0,\y+0.5)$) {\textcolor{black!60}{$S_2$}};
			\node (Srlabel) at ($(Sr)+(0,\y+0.5)$) {\textcolor{black!60}{$S_r$}};
			
			\draw [fill=myLightgrey,rounded corners=2pt] ($(S1)-(\x,\y)$) rectangle ($(S1)+(\x,\y)$);
			\draw [fill=myLightgrey,rounded corners=2pt] ($(S2)-(\x,\y)$) rectangle ($(S2)+(\x,\y)$);
			\draw [fill=myLightgrey,rounded corners=2pt] ($(Sr)-(\x,\y)$) rectangle ($(Sr)+(\x,\y)$);
			
			%S1
			\node (A1) at ($(S1)-(\x/2,0)$) {};
			\node (A1label) at ($(A1)+(-\x/4,\y+0.3)$) {$A_1$};
			\draw ($(A1)-(0,\y)$) -- ($(A1)-(0,-\y)$);
			
			\node (B1) at ($(S1)+(\x/2,0)$) {};
			\node (B1label) at ($(B1)+(\x/4,\y+0.3)$) {$B_1$};
			\draw ($(B1)-(0,\y)$) -- ($(B1)-(0,-\y)$);
			
			%S2
			\node (A2) at ($(S2)-(\x/2,0)$) {};
			\node (A2label) at ($(A2)+(-\x/4,\y+0.3)$) {$A_2$};
			\draw ($(A2)-(0,\y)$) -- ($(A2)-(0,-\y)$);
			
			\node (B2) at ($(S2)+(\x/2,0)$) {};
			\node (B2label) at ($(B2)+(\x/4,\y+0.3)$) {$B_2$};
			\draw ($(B2)-(0,\y)$) -- ($(B2)-(0,-\y)$);
			
			%Sr
			\node (Ar) at ($(Sr)-(\x/2,0)$) {};
			\node (Arlabel) at ($(Ar)+(-\x/4,\y+0.3)$) {$A_r$};
			\draw ($(Ar)-(0,\y)$) -- ($(Ar)-(0,-\y)$);
			
			\node (Br) at ($(Sr)+(\x/2,0)$) {};
			\node (Brlabel) at ($(Br)+(\x/4,\y+0.3)$) {$B_r$};
			\draw ($(Br)-(0,\y)$) -- ($(Br)-(0,-\y)$);
			
			%A-vertices
			\foreach \i in {1,2,r}
			{
				\node (A-\i-vertices) at ($(A\i)+(-\x/4,\y)$) {};
				\foreach \j in {1,2,3,4,5,6}
				{
					\node[draw,vertex] (a-\j-\i) at ($(A-\i-vertices)+(0,-\j*0.8)$) {};
				}
			}
			
			%B-vertices
			\foreach \i in {1,2,r}
			{
				\node (B-\i-vertices) at ($(B\i)+(\x/4,\y)$) {};
				\foreach \j in {1,2,3,4,5,6}
				{
					\node[draw,vertex] (b-\j-\i) at ($(B-\i-vertices)+(0,-\j*0.8)$) {};
				}
			}
		\end{pgfonlayer}
		
		\begin{pgfonlayer}{foreground}
			%P1 linkage
			\node (P1label) at ($(B1)!0.5!(A2)+(0,\y*0.9)$) {\textcolor{black!20!gray}{$\mathcal{P}_1$}};
			\foreach \i in {1,2,3,4,5,6}
			{
				\draw [decorate,decoration={snake,amplitude=.4mm,segment length=3mm},line width=1.5pt,gray] (b-\i-1) -- (a-\i-2);
			}
			
			%J1
			\node (J1label) at ($(A1)!0.5!(B1)+(0,-\y*0.9)$) {\textcolor{black!30!myLightViolet}{$\mathcal{L}_1$}};
			\draw [decorate,decoration={snake,amplitude=.3mm,segment length=1.5mm},line width=1.5pt,myLightViolet] (a-1-1) -- (b-1-1);
			\draw [decorate,decoration={snake,amplitude=.3mm,segment length=1.5mm},line width=1.5pt,myLightViolet] (a-2-1) -- (b-4-1);
			\draw [decorate,decoration={snake,amplitude=.3mm,segment length=1.5mm},line width=1.5pt,myLightViolet] (a-3-1) -- (b-2-1);
			\draw [decorate,decoration={snake,amplitude=.3mm,segment length=1.5mm},line width=1.5pt,myLightViolet] (a-4-1) -- (b-6-1);
			\draw [decorate,decoration={snake,amplitude=.3mm,segment length=1.5mm},line width=1.5pt,myLightViolet] (a-5-1) -- (b-3-1);
			\draw [decorate,decoration={snake,amplitude=.3mm,segment length=1.5mm},line width=1.5pt,myLightViolet] (a-6-1) -- (b-5-1);

			%Q2
			\node (Q2label) at ($(A2)!0.5!(B2)+(0,-\y*0.9)$) {\textcolor{black!30!myGreen}{$\mathcal{Q}_2$}};
			\path[draw,dashed,very thick,myGreen] ($(S2)-(2,0)$) -- ($(S2)-(0.7,0)$); 
			\draw [decorate,decoration={brace,amplitude=4pt,aspect=0.67},very thick,myGreen] ($(a-3-2)+(-0.5,-0.2)$) -- ($(a-1-2)+(-0.5,0.2)$) node[pos=0.67, left, xshift=-2pt] {$A^1_2$};
			\draw [decorate,decoration={brace,amplitude=4pt,aspect=0.67},very thick,myGreen] ($(a-6-2)+(-0.5,-0.2)$) -- ($(a-4-2)+(-0.5,0.2)$) node[pos=0.67, left, xshift=-2pt] {$A^2_2$};
			
			\draw[decorate,decoration={snake,amplitude=.3mm,segment length=1.5mm},line width=1.5pt,myGreen] (a-1-2) to[quick curve through ={($(S2)+(0.5,0)$)}] (a-6-2);
			\draw[decorate,decoration={snake,amplitude=.3mm,segment length=1.5mm},line width=1.5pt,myGreen] (a-2-2) to[quick curve through ={($(a-3-2)+(0.8,0)$)}] (a-4-2);
			\draw[decorate,decoration={snake,amplitude=.3mm,segment length=1.5mm},line width=1.5pt,myGreen] (a-3-2) to[quick curve through ={($(a-4-2)+(0.8,0)$)}] (a-5-2);
			
			%Q(e) in Qr
			\node (Qelabel) at ($(Ar)!0.5!(Br)+(0,-\y*0.9)$) {\textcolor{black!30!pistachio}{$Q(e) \in \mathcal{Q}_r$}};
			\draw[decorate,decoration={snake,amplitude=.3mm,segment length=1.5mm},line width=1.5pt,pistachio] (a-2-r) to[quick curve through ={($(Sr)$)}] (a-6-r);
		\end{pgfonlayer}
		
		\begin{pgfonlayer}{main}
			%P1 path
			\node (path1label) at ($(b-2-r)+(0.8,0)$) {\textcolor{darkgoldenrod}{$P_1$}};
			\path[draw,line width=3pt,white!30!darkgoldenrod] (a-1-1) -- (b-1-1) -- (a-1-2) -- (b-3-2) -- ($(b-3-2)+(1,0)$) -- ($(a-5-r)-(1,0)$) -- (a-5-r) -- (b-2-r);
			
			%Ta
			\draw (a-2-1) edge[bend left,very thick,myOrange] (a-4-1);
			\draw (a-2-1) edge[bend right,very thick,myOrange] (a-5-1);
			\draw (a-3-1) edge[bend right,very thick,myOrange] (a-5-1);
			\draw (a-6-1) edge[bend left,very thick,myOrange] (a-5-1);
			\node (Talabel) at ($(A1)+(-1.3,0)$) {\textcolor{black!20!myOrange}{$T_{a}$}};
			\node (pathAlabel) at ($(b-4-r)+(0.8,0)$) {\textcolor{black!30!myOrange}{$P_{\alpha}$}};
			\path[draw,line width=3pt,myOrange] (a-5-1) -- (b-3-1) -- (a-3-2) -- (b-2-2) -- ($(b-2-2)+(1,0)$) -- ($(a-6-r)-(1,0)$) -- (a-6-r) -- (b-4-r);
		\end{pgfonlayer}
		
	\end{tikzpicture}}
	\caption{$\mathcal{P}_1$ is the $B_1$-$A_2$-linkage and $\mathcal{L}_1$ the $A_1$-$B_1$-linkage that both exist by definition of the path-of-sets system.
		The path $P_1$ is given as one example of the paths starting in $A_1$ and ending in $B_r$, within $S_1$ it uses the path in $\mathcal{L}_1$ starting in the right vertex and then it continues with the path of $\mathcal{P}_1$ starting in the vertex the path of $\mathcal{L}_1$ ends in.
		In $S_2$ we see the partition $(A_2^1, A^2_2)$ of $A_2$ chosen in the second round of the cut matching game together with the linkage $\mathcal{Q}_2$ providing the answer of the matching player.
		In $S_r$ the picture shows the path $Q(e) \in \mathcal{Q}_r$ for the edge $e \in M_r$ with endpoint $\alpha$.
		The yellow edges in $A_1$ represent the spanning tree $T_a$ of the model of a vertex $a$ in the clique minor obtained from $H$.
		\label{fig:proof_thm_1}
	}
\end{figure}

\begin{proof}
	Let $G$ be of treewidth at least $k$.
	Let $q$ be the polynomial and $f$ be the function in \cref{thm:path-of-sets}.
	Let $c \geq 1$ and $h'_0$ be constants such that the cut player wins the cut-matching game within $c(\log h')^2$ rounds on a vertex set of size $h'$, $h' \geq h'_0$, when using the strategy from \cref{thm:cut-matching}.
	Pick
	\[h = \left\lfloor \frac{ k }{  q(\log k, c(\log k)^2 + 1) \cdot (c (\log k)^2 + 1)^{48} } \right\rfloor\]
	and $r = \lfloor c (\log h)^2 + 1\rfloor$.
	Note that we may add a large enough constant to the polynomial $p$ in order to make the guarantee of \cref{thm:sqrt} trivial if $h \leq h'_0$. 
	Assume thus that the cut player wins in at most $r$~rounds on a vertex set of size~$h$.
	Observe that
	\[f(h, r) = hr^{48} q(\log h, \log r) \leq \frac{ k \cdot (c (\log h)^2 + 1)^{48} \cdot q(\log h, \log r) }{  q(\log k, c(\log k)^2 + 1) \cdot (c (\log k)^2 + 1)^{48} } \leq \frac{ k \cdot q(\log h, \log r) }{ q(\log k, c(\log k)^2 + 1) } \]
	where the second inequality holds since $h \leq k$ because $c \geq 1$ and $q$ has only positive coefficients.
	Furthermore, for the same reasons we have $q(\log h, \log r) \leq q(\log k, c(\log k)^2 + 1)$ and thus $f(h, r) \leq k$.
	
	It follows that $G$ has treewidth at least $f(h, r)$ and thus, by \cref{thm:path-of-sets}, graph $G$ contains a strong $(h,r)$-path-of-sets system $\Brace{\mathcal{S},\mathcal{A},\mathcal{B},\mathcal{P}}$.
	Let $\mathcal{S}=S_1,\dots, S_r$, $\mathcal{A} = A_1,\dots, A_r$, $\mathcal{B} = B_1,\dots,B_r$, and $\mathcal{P} = \mathcal{P}_1, \ldots, \mathcal{P}_{r-1}$.
	For illustration of the remaining part of the proof, see \cref{fig:proof_thm_1}.
	By property ii) of path-of-sets systems $\Brace{\mathcal{S},\mathcal{A},\mathcal{B},\mathcal{P}}$ contains an $A_i$-$B_i$-linkage $\mathcal{L}_i$ within $\InducedSubgraph{G}{S_i}$ for all $1 \leq i \leq r$.
	Also for all $1 \leq i < r$, $\mathcal{P}_i$ is a $B_i$-$A_{i+1}$-linkage which is vertex-disjoint from each set $S_j$, $j \in [r]$ within the path-of-sets system, except for $S_i$ and $S_{i+1}$ to which $\mathcal{P}_i$ is disjoint except for the endpoints of its paths.
	We thus combine the linkages $\mathcal{L}_1,\dots,\mathcal{L}_r$ and $\mathcal{P}_1,\dots,\mathcal{P}_{r-1}$ to obtain $h$ pairwise disjoint paths $P_1,\dots,P_h$ starting in $A_1$ and ending in $B_r$.
	Each of these paths now has exactly one vertex in every $A_i$ and every $B_i$, $i \in [r]$.
	
	We now play the cut-matching game on the set $V = [h]$ with the strategy for the cut player of~\cref{thm:cut-matching}.
	The game lasts at most $r$ rounds.
	At each round $i \in [r]$, we simulate the matching player as follows: For a given partition $\Brace{V^1,V^2}$ of $[h]$, we define for both $c=1,2$ the set $A_i^c \subseteq A_i$ as those vertices $v \in A_i$ that lie on a path $P_j$ with $j \in V^c$.
	Since the path-of-sets system is strong, $A_i$ is well-linked in $G[S_i]$.
	We thus use the well-linkedness of $A_i$ in $G[S_i]$ to obtain a $A_i^1$-$A_i^2$-linkage $\mathcal{Q}_i$ contained in $G[S_i]$.
	Finally, the matching player answers with a matching $M_i$ that corresponds to which elements of $A_i^1$ were connected to which elements of $A_i^2$.
	In formulas, \[M_i = \{ \alpha\beta \mid \alpha \in V^1, \beta \in V^2\text{, and } \mathcal{Q}_i \text{ contains a path with endpoints in } P_\alpha \text{ and } P_\beta \}\text{.}\]
	
	Let $H$ be the graph at the end of the cut-matching game.
	Since $H$ is an $\Omega(1)$-expander of maximum degree $\Oh((\log h)^2)$, by~\cref{thm:cliques-in-expanders}, it contains a clique minor of size $t$ where $t = \Omega(\sqrt{h}/(\log h)^2)$.
	Denote the model of this clique as $(K_a)_{a \in [t]}$.
	That is, the sets $K_a$, $a \in [t]$, are pairwise disjoint subsets of $V(H)$, each $H[K_a]$ is connected, and for every $ab \in \binom{[t]}{2}$ there exists $j_{a,b} \in K_a$ and $j_{b,a} \in K_b$ with $j_{a,b}j_{b,a} \in E(H)$.
	Let $T_a$ be a spanning tree of $H[K_a]$.
	
	Since $H$ consists of the matchings $M_1,\ldots,M_r$, to every edge $e \in E(H)$ we can associate the path $Q(e) \in \mathcal{Q}_i$ that induced $e$, that is, $e \in M_i$, $e= \alpha\beta$, and $Q(e)$ has its endpoints on the paths $P_\alpha$ and $P_\beta$.
	
	For every $a \in [t]$, we construct a connected subgraph $G_a$ of $G$ consisting of
	the paths $P_\alpha$ for each $\alpha \in K_a$, 
	the paths $Q(e)$ for each $e \in E(T_a)$,
	and the paths $Q(j_{a,b}j_{b,a})$, excluding the endpoint on $P_{j_{b,a}}$, for all $a, b \in t$ with  $a < b \leq t$.
	Then, as $(K_a)_{a \in [t]}$ is a clique model in $H$,
	$(G_a)_{a \in [t]}$ is a bramble in $G$.
	Furthermore, $(G_a)_{a \in [t]}$ is of congestion $2$ as every vertex of $G$
	can lie on at most one path $P_\alpha$ and at most one path $Q(e)$. 
	Finally, $t = \Omega(\sqrt{h} / (\log h)^2) = \widetilde{\Omega}(\sqrt{k})$.
\end{proof}

\section{Upper bound for exponential brambles}\label{sec:lb}
In this section we prove \cref{thm:lb}. For convenience we restate it below.
\secondresult*

\paragraph{Treewidth sparsifier.}
It is straightforward to lift a bramble in a topological minor $H$ of a graph $G$
to a bramble in $G$ without decreasing the order of the bramble.
Thus, by hiding the polylogarithmic loss on the treewidth of \cref{thm:tw-sparsifier}
within the $p(\cdot)$ factor of \cref{thm:lb}, we can assume
that we consider a graph $G$ of treewidth at least $k$ and $|V(G)| = \widetilde{\Oh}(k^4)$.

% \todo{define flow and unit of flow, or give a referrence\\
	% ms: Done. Basically copied the def from \citet{GroheM09}.}
\paragraph{Concurrent flow.}
Let us define a \emph{flow} $f_{u, v}$ between two vertices $u$ and $v$ as a weighted collection of pairwise distinct paths between $u$ and $v$ (if $u = v$ then the only possible path in $f_{u, v}$ has length 0).
The \emph{units of flow sent through a vertex $w$} by $f_{u, v}$ is the sum of the weights of the paths in $f_{u, v}$ that contain~$w$.
The \emph{value} of $f_{u, v}$ is defined as the units of flow sent through $u$ (or equivalently through $v$).
For a set $W \subseteq V(G)$, a \emph{concurrent flow of value $\nu$
	and congestion $\gamma$} is a collection of $|W|^2$ flows $(f_{u,v})_{(u,v) \in W \times W}$
such that
\begin{itemize}
	\item $f_{u,v}$ sends exactly $\nu$ units of flow from $u$ to $v$; and
	\item for each vertex $w$ the total flow over all flows $f_{u, v}$ sent through $w$ is at most $\gamma$.
\end{itemize}
We need the following well-known result on well-linked sets and multicommodity flows;
for a proof, see, e.g., the first paragraph of the proof of Lemma~14 by \cite{GroheM09}.
\begin{lemma}[\cite{GroheM09}]\label{lem:flow}
	Let $G$ be a graph of treewidth at least $k$. 
	Then there exists a set $W \subseteq V(G)$ of size at least $k/3$
	and a concurrent flow of value $1$ and congestion at most $\beta k \log k$, for some constant $\beta$ (not dependent on $k$).
\end{lemma}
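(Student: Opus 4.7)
My plan is to prove \cref{lem:flow} in two stages: first extract a ``robustly'' well-linked set $W$ of size $\Omega(k)$ from the treewidth assumption, and then route the concurrent flow by combining a cut lower bound on the demand with the classical $O(\log k)$ flow–cut gap for undirected multicommodity flow.

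\emph{Stage 1 — extracting a well-linked set.}
I would first invoke the standard fact that a graph $G$ of treewidth at least $k$ contains a vertex set $W \subseteq V(G)$ with $|W| \geq k/3$ that is \emph{well-linked with a constant fraction}, in the sense that for every $A \subseteq V(G)$ one has $|N_G(A)| \geq c_0 \min(|W \cap A|,\,|W \setminus A|)$ for an absolute constant $c_0 > 0$. The extraction is the classical peeling argument implicit in the Seymour--Thomas bramble theorem: start with a bag $X$ of size $\Theta(k)$ in a tree decomposition of optimal width (or with the support of a bramble of order $k$), and iteratively discard the smaller side of any cut witnessing failure of well-linkedness; a potential argument shows a well-linked remnant of size at least $k/3$ survives. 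I would cite this extraction in the form used by~\cite{GroheM09} rather than redo it.

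\emph{Stage 2 — the cut bound and the flow–cut gap.}
Let $W$ be the set from Stage 1 and set $h=|W| \geq k/3$. Consider the uniform multicommodity LP asking for one unit of flow between every ordered pair in $W \times W$, minimizing the maximum vertex-congestion. For any cut $(A, V(G)\setminus A)$, the demand that must cross is $|W \cap A|\cdot|W \setminus A|$, while the vertex-cut capacity is $|N_G(A)|$, which by well-linkedness is at least $c_0\min(|W \cap A|,|W \setminus A|)$. Hence the sparsest-cut lower bound on congestion is
\[
  \max_A \frac{|W\cap A|\cdot|W\setminus A|}{|N_G(A)|} \;\leq\; \frac{\max(|W\cap A|,|W\setminus A|)}{c_0} \;\leq\; \frac{|W|}{c_0} \;=\; O(k).
\]
Applying the undirected flow–cut gap of Linial--London--Rabinovich (with the standard $O(\log h) = O(\log k)$ loss), translated from edge to vertex capacities via vertex splitting — enabled by first passing to the bounded-degree topological minor of \cref{thm:tw-sparsifier}, where vertex and edge cuts differ only by a constant factor — yields a feasible concurrent flow with vertex-congestion $O(k \log k) = \beta k \log k$, completing the proof.

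\emph{Anticipated main obstacle.}
The one genuinely delicate point is the conversion between vertex-capacitated and edge-capacitated flow–cut gaps: the textbook statement of the $O(\log k)$ gap is for edge capacities, whereas \cref{lem:flow} demands vertex congestion. I would handle this by first invoking \cref{thm:tw-sparsifier} to reduce to a maximum-degree-$3$ instance, where any vertex-congestion bound follows from an edge-congestion bound up to a factor of $3$, and by the standard vertex-splitting reduction so that the off-the-shelf flow–cut gap theorem can be applied verbatim. The constants $1/3$ and $\beta$ are then obtained by straightforward bookkeeping through the two stages.
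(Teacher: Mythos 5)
The paper does not prove \cref{lem:flow} at all: it imports the statement from the first paragraph of the proof of Lemma~14 of \cite{GroheM09}. Your two-stage plan---extract a well-linked set $W$ of size $\Omega(k)$ from the treewidth bound, then route the uniform demand on $W$ by playing the cut lower bound coming from well-linkedness against an $O(\log k)$ flow--cut gap---is exactly the standard argument and is, in spirit, what Grohe and Marx do, so nothing is missing at the level of ideas. Stage~1 and the sparsest-cut computation in Stage~2 are fine.

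Two concrete defects in the execution of Stage~2, both avoidable. First, detouring through \cref{thm:tw-sparsifier} does not prove the lemma as stated: the sparsifier replaces $G$ by a topological minor whose treewidth is only $k/p(\log k)$, so the well-linked set you extract there has size $k/(3p(\log k))$ rather than $k/3$, and the congestion bound is likewise in terms of the reduced parameter. In this paper that polylogarithmic loss would be absorbed downstream, but it yields a strictly weaker statement than \cref{lem:flow}, and it makes a 2009 lemma depend on the much heavier Chekuri--Chuzhoy machinery. Second, the ``standard vertex-splitting reduction'' produces a \emph{directed} flow instance, and the Leighton--Rao/Linial--London--Rabinovich gap you want to apply verbatim is an undirected theorem (the directed uniform flow--cut gap is polynomially large), so that step as written does not go through. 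The clean fix is to drop both crutches and invoke the vertex-capacitated flow--cut gap for product multicommodity flow directly on $G$---this is how \cite{GroheM09} argue, via LP duality against fractional vertex separators---which gives $|W| \geq k/3$ and vertex congestion $O(k\log k)$ with no detour. (Alternatively, if you insist on edge capacities, note that in a maximum-degree-$3$ graph no splitting is needed at all: with unit edge capacities the instance stays undirected, and edge cuts versus vertex separators, as well as edge congestion versus vertex congestion, differ by a factor of at most $3$; but you would still be stuck with the polylogarithmic loss from the degree reduction.)
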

\noindent Note that the sum of the values of all flows in a concurrent flow of value $1$ is $\Oh(k^2)$;
the essence of \cref{lem:flow} is that only a tiny part of it, an $\Oh((\log k )/ k)$ fraction, can pass through a single vertex.
Without loss of generality, we can assume that the constant $\beta$ of \cref{lem:flow} satisfies $\beta > 1/9$ (which we need for technical reasons later on).

\paragraph{Sampling a path.}
Let $G$ be a graph.
Similarly as~\cite{GroheM09}, we use the concurrent flow given by \cref{lem:flow}
to sample paths between vertices of a given subset of $V(G)$. % $W$. 
From now on, let $W \subseteq V(G)$ be a set of size exactly $\lfloor \nicefrac{k}{3} \rfloor$ and $(f_{u,v})_{(u,v) \in W \times W}$ the concurrent flow
given by \cref{lem:flow} applied to $G$.
(Note that, if the size of the set $W$ promised by \cref{lem:flow} is larger than $\lfloor \nicefrac{k}{3} \rfloor$ we can omit vertices from $W$ and drop the corresponding flows from the concurrent flow obtaining a concurrent flow of the same value and at most the same congestion.)
For each $(u, v) \in W \times W$ let $\mathcal{P}_{u,v}$ be the family of paths in $f_{u, v}$ and let $f_{u,v}(P)$ be the flow value passed along a path $P \in \mathcal{P}_{u,v}$ (i.e.\ the weight of $P$ in $f_{u, v}$).
Since the value of $f_{u,v}$ is $1$, flow $f_{u,v}$ can be interpreted as a probability
distribution over $\mathcal{P}_{u,v}$.
\begin{claim}\label{cl:sample}
	Fix $x \in V(G)$.
	Assume that we are sampling two vertices $u,v \in W$ % \todo{the reviewers think $W$ here is unclear, but I do not really see why, as it's chosen in the preceeding paragraph\\
		% ms: Same for me. It's ok as it is.}
	uniformly at random
	and then sampling a path from $u$ to $v$ according to the following distribution:
	the probability of sampling $P \in \mathcal{P}_{u,v}$
	equals $f_{u,v}(P)$ (and paths not from $\mathcal{P}_{u,v}$ have zero probability). 
	Then, the probability that $x$ lies on a sampled path is at most $9 \beta \log k / k$.
\end{claim}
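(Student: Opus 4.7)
The plan is a short direct calculation that exposes the total congestion at $x$ and then invokes the concurrent flow bound from \cref{lem:flow}. First, I would unfold the two-stage sampling by the law of total probability to write
\[
\Pr[x \in V(P)] \;=\; \frac{1}{|W|^2}\sum_{(u,v)\in W\times W}\;\sum_{\substack{P\in\mathcal{P}_{u,v}\\ x\in V(P)}} f_{u,v}(P).
\]
The key observation is that, by the definition of ``units of flow through a vertex'', the inner sum is precisely the number of units of flow that $f_{u,v}$ routes through $x$. Exchanging the order of summation, the whole double sum is therefore the total flow through $x$ in the concurrent flow $(f_{u,v})_{(u,v) \in W \times W}$, which is bounded by the congestion $\beta k \log k$ guaranteed by \cref{lem:flow}.

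Second, dividing by $|W|^2$ and using $|W| = \lfloor k/3 \rfloor$ gives
\[
\Pr[x \in V(P)] \;\leq\; \frac{\beta k \log k}{\lfloor k/3\rfloor^2} \;\leq\; \frac{9\beta \log k}{k},
\]
where the last inequality is an equality for $k$ divisible by $3$ and, for other $k$, holds up to a $1 + O(1/k)$ rounding factor that is absorbed into the constant $\beta$ (the paragraph following \cref{lem:flow} already leaves such slack by allowing any $\beta > 1/9$).

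There is no genuine obstacle here; the argument is essentially an exchange of summations followed by an application of \cref{lem:flow}. No property of $W$ beyond its cardinality is used, and the only bookkeeping concern is the rounding of $|W|$ to $\lfloor k/3\rfloor$, handled as above.
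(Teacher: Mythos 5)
Your proposal is correct and follows essentially the same route as the paper's proof: unfold the two-stage sampling so that the inner sum over paths through $x$ becomes the flow that $f_{u,v}$ sends through $x$, sum over the at most $k^2/9$ ordered pairs, and apply the congestion bound $\beta k\log k$ from \cref{lem:flow}. Your explicit handling of the $\lfloor k/3\rfloor$ rounding is if anything slightly more careful than the paper's, which silently treats $|W|^2$ as $k^2/9$.
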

\begin{proof}
	The experiment in the statement is equivalent to sampling a pair $(u,v) \in W \times W$
	and then sampling a path $P \in \mathcal{P}_{u,v}$ according to $f_{u,v}$ as a probability
	distribution on $\mathcal{P}_{u,v}$.
	Since there are at most $k^2/9$ possible choices for $f_{u,v}$,
	the probability that $x$ is on the sampled path is bounded above by
	$9 \bar{f}(x) / k^2$, where $\bar{f}(x)$ is the total amount of flow passing through $x$. % \todo{does this formulation make it clearer?\\
		% ms: I think it's clear.}
	Since $\bar{f}(x) \leq \beta k \log k$ by the congestion property of the concurrent flow $(f_{u, v})$, the claim follows.
	\cqed\end{proof}

\paragraph{Sampling a closed walk.}
Let $\ell := \lfloor \frac{k^{0.5+\delta}}{72\beta} \rfloor $.
By \emph{sampling a walk $\mathcal{W}$} we mean the following experiment. 
Sample uniformly and independently at random
$\ell$ vertices $s_1,s_2,\ldots,s_\ell \in W$
and then, for every $i \in [\ell]$, sample path $P_i \in \mathcal{P}_{s_i,s_{i+1}}$
according to $f_{s_i,s_{i+1}}$ (with indices cyclically modulo $\ell$). 
The walk $\mathcal{W}$ is then the concatenation of $P_1,P_2,\ldots,P_\ell$.
Note that the vertices $s_1, \ldots, s_\ell$ are not necessarily distinct.

We use \cref{cl:sample} in the following way.
\begin{claim}\label{cl:sample-cycle}
	Let $\alpha$ be a real number with $0 < \alpha < 1$ and let $X \subseteq V(G)$ be of size at most $\alpha \ell / \log k$.
	Sample a walk $\mathcal{W}$. Then,
	$$\mathrm{Pr}(X \cap V(\mathcal{W}) = \emptyset) \geq \exp\left(-\Oh(\alpha k^{2\delta})\right).$$
\end{claim}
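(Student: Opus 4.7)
The plan is to apply the Lovász Local Lemma (\cref{thm:lll}) to the events $A_i := \{V(P_i)\cap X \neq \emptyset\}$ for $i \in [\ell]$, where $P_i$ is the $i$-th of the $\ell$ sampled paths forming $\mathcal{W}$.

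First I will bound $\Pr(A_i)$. By construction, the marginal distribution of the triple $(s_i, s_{i+1}, P_i)$ is exactly the experiment of \cref{cl:sample}, so for every fixed $x \in V(G)$ we have $\Pr(x \in V(P_i)) \leq 9\beta \log k / k$. A union bound over $X$, together with $|X| \leq \alpha\ell/\log k$, yields
$$\Pr(A_i) \;\leq\; |X|\cdot\frac{9\beta\log k}{k}\;\leq\; \frac{9\beta\alpha\ell}{k}\;=:\;p.$$
Plugging in $\ell \leq k^{0.5+\delta}/(72\beta)$, $\alpha < 1$, and $\delta \leq 1/2$ gives $p \leq 1/8$, which sits in the range where LLL is applicable.

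Next I will identify the dependency structure. The event $A_i$ is determined by $s_i$, $s_{i+1}$, and $P_i$; the vertices $s_1,\ldots,s_\ell$ are drawn independently, and, conditional on them, the paths $P_1,\ldots,P_\ell$ are conditionally independent. Hence $A_i$ is mutually independent of the collection $\{A_j : j \in [\ell]\setminus\{i-1,i,i+1\}\}$, with indices taken cyclically, so one may take $\Delta = 2$ in \cref{thm:lll}. With the choice $x := 4p$ one has $x(1-x)^{\Delta} \geq p$ since $1 - 4p \geq 1/2$, and \cref{thm:lll} then gives
$$\Pr\!\left(\bigcap_{i=1}^\ell \overline{A_i}\right) \;\geq\; (1-4p)^\ell \;\geq\; \exp(-8p\ell).$$
A short calculation shows $8p\ell \leq \alpha k^{2\delta}/(72\beta)$ by the choice of $\ell$, so the right-hand side is $\exp(-\Oh(\alpha k^{2\delta}))$, as required.

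The point I expect to need the most care is the verification of \emph{mutual} (not merely pairwise) independence of $A_i$ with the distant $A_j$'s; this is what rules out the naive union bound, which would only give $\Pr(\bigcap\overline{A_i}) \geq 1 - \Oh(\alpha k^{2\delta})$ and be vacuous whenever $\alpha k^{2\delta}\gtrsim 1$, and therefore forces the use of LLL. Once the two layers of randomness — independent endpoints $s_i$, and conditionally independent paths $P_i$ given them — are disentangled, the rest amounts to routine constant tracking.
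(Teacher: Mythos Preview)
Your proof is correct and follows essentially the same approach as the paper: define the events $A_i$, bound $\Pr(A_i)$ via \cref{cl:sample} and a union bound over $X$, observe the cyclic dependency structure with $\Delta=2$, and apply the Lov\'asz Local Lemma with $x=4p$. The only cosmetic difference is that the paper converts $(1-4\pi(k))^\ell$ to an exponential via the inequality $(1+1/x)^x\le e$, whereas you use $(1-4p)\ge e^{-8p}$ for $4p\le 1/2$; both yield the same $\exp(-\Oh(\alpha k^{2\delta}))$ bound.
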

\begin{proof}
	Let $s_1,\ldots,s_\ell$ and $P_1,\ldots,P_\ell$ be as in the definition of sampling a walk.
	
	Let $A_i$ be the event that $X \cap V(P_i) \neq \emptyset$.
	\Cref{cl:sample} with union bound over all $x \in X$ implies that
	\begin{equation}\label{eq:PrAi}
		\mathrm{Pr}(A_i) \leq \frac{9 \alpha \beta \ell}{k} \leq \frac{\alpha}{8 k^{0.5-\delta}}.
	\end{equation}
	Observe that $A_i$ is independent of $A_j$ unless $i=j+1$ or $i+1=j$ (with indices cyclically
	modulo~$\ell$). That is, $A_i$ is independent of all but two other events $A_i$. 
	
	Let $\pi(k)$ denote the right-hand side of~\cref{eq:PrAi}; note that $\pi(k) < 1/8$. 
	Then, $\mathrm{Pr}(A_i) \leq (4\pi(k)) \cdot (1-4\pi(k))^2$. 
	Hence, by the Lov\'{a}sz Local Lemma~(\cref{thm:lll}), we obtain that
	\begin{equation}\label{eq:PrCompAi}
		\mathrm{Pr}\left(\bigwedge_{i=1}^\ell \overline{A_i}\right) \geq \prod_{i=1}^\ell \left(1-4\pi(k)\right) \geq \left(1-\frac{\alpha}{2k^{0.5-\delta}}\right)^{\frac{k^{0.5+\delta}}{72 \beta}} = \left(\left(1-\frac{1}{\frac{2k^{0.5-\delta}}{\alpha}}\right)^{\frac{2k^{0.5 - \delta}}{\alpha}}\right)^{\frac{\alpha k^{2\delta}}{144\beta}}.
	\end{equation}
	From the folklore relation $(1 + 1/x)^x \leq e \leq (1 + 1/x)^{x + 1}$ for all positive $x$ we obtain that the right-hand side of~\cref{eq:PrCompAi} is in
	\begin{equation*}
		\exp\left(-\Oh\left(\frac{\alpha k^{2\delta}}{\beta}\right)\right) = \exp\left(-\Oh(\alpha k^{2\delta})\right).
	\end{equation*}
	This concludes the proof of the claim.
	\cqed\end{proof}

\paragraph{Many closed walks sampled independently form a bramble.}
\Cref{cl:sample-cycle} asserts that there is a nontrivial chance that a hitting set of
size at most $\alpha \ell / \log k$ misses a sampled walk. On the other hand, two walks sampled independently at random
intersect with high probability, so we can sample a large number of walks that pairwise intersect.
\begin{claim}\label{cl:many-walks}
	Let $\mathcal{W}_1$ and $\mathcal{W}_2$ be two walks sampled independently. 
	Then, 
	$$\mathrm{Pr}(V(\mathcal{W}_1) \cap V(\mathcal{W}_2) = \emptyset) \leq \exp\left(-\Omega(k^{2\delta})\right).$$
	Consequently, for some universal constant $\lambda > 0$, 
	if one samples a family $\mathcal{B}$ of $\lfloor \exp(\lambda k^{2\delta}) \rfloor$
	walks (each walk is sampled independently), then 
	the walks pairwise intersect with probability larger than $0.5$.
\end{claim}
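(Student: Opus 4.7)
The plan is to reduce the intersection event to one that involves only the random ``stepping stones'' $s_i^{(j)} \in W$, which are easy to control by direct computation. Writing $T^{(j)} = \{s_1^{(j)}, \ldots, s_\ell^{(j)}\} \subseteq W$ for $j \in \{1,2\}$, observe $T^{(j)} \subseteq V(\mathcal{W}_j)$, so it suffices to upper bound $\mathrm{Pr}(T^{(1)} \cap T^{(2)} = \emptyset)$. I would condition on the outcome of $\mathcal{W}_2$ (equivalently on $T^{(2)}$) and analyse the fresh random choices of $\mathcal{W}_1$ only.

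The first step is to show $|T^{(2)}| \geq \ell/4$ with probability at least $1 - \exp(-\Omega(\ell))$. Since the $s_i^{(2)}$ are i.i.d.\ uniform on $W$, each vertex $w \in W$ lies in $T^{(2)}$ with probability $1 - (1-1/|W|)^\ell$, so $\mathbb{E}[|T^{(2)}|] = |W|\bigl(1 - (1 - 1/|W|)^\ell\bigr) \geq \ell/2$, using $(1-1/|W|)^\ell \leq e^{-\ell/|W|}$ and the elementary estimate $1 - e^{-x} \geq x/2$ for $x \in [0,1]$, together with $\ell/|W| \leq 1/(24\beta) < 1$ (the assumption $\beta > 1/9$ is convenient here). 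The quantity $|T^{(2)}|$ is $1$-Lipschitz in each coordinate $s_i^{(2)}$, so McDiarmid's inequality yields $\mathrm{Pr}(|T^{(2)}| \leq \ell/4) \leq \exp(-\ell/8)$.

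Conditional on any outcome of $\mathcal{W}_2$ with $|T^{(2)}| \geq \ell/4$, the vertices $s_1^{(1)}, \ldots, s_\ell^{(1)}$ are still i.i.d.\ uniform on $W$, hence
\[\mathrm{Pr}\bigl(T^{(1)} \cap T^{(2)} = \emptyset\bigr) = \bigl(1 - |T^{(2)}|/|W|\bigr)^\ell \leq \exp\bigl(-\ell \cdot |T^{(2)}|/|W|\bigr) \leq \exp\bigl(-\ell^2/(4|W|)\bigr) = \exp(-\Omega(k^{2\delta})),\]
since $\ell^2/|W| = \Theta(k^{2\delta})$. Combining with the first step and noting that $\ell = \Omega(k^{2\delta})$ for all $\delta \in (0, 0.5]$ gives $\mathrm{Pr}(V(\mathcal{W}_1) \cap V(\mathcal{W}_2) = \emptyset) \leq \exp(-\Omega(k^{2\delta}))$. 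The second part of the claim is then a union bound over the $\binom{N}{2}$ pairs with $N = \lfloor\exp(\lambda k^{2\delta})\rfloor$: the failure probability is at most $N^2 \exp(-C k^{2\delta})$ with $C$ the constant from the first part, which drops below $1/2$ once $\lambda < C/2$ and $k$ is sufficiently large (for the finitely many small $k$, $\lambda$ can be chosen so that $N \leq 1$, making the conclusion vacuous).

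The main technical obstacle is the first step: the naive birthday-paradox heuristic that ``$\ell$ uniform samples from $W$ are essentially distinct'' breaks down when $\delta$ is close to $0.5$, because then $\ell$ is a constant fraction of $|W|$. Working with the exact expectation of $|T^{(2)}|$ together with McDiarmid's concentration recovers a linear-in-$\ell$ lower bound on $|T^{(2)}|$ uniformly over $\delta \in (0, 0.5]$, which is what drives the final $\exp(-\Omega(k^{2\delta}))$ bound.
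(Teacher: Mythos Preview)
Your proof is correct and follows essentially the same route as the paper: reduce to the stepping stones $T^{(j)}$, show $|T^{(2)}|\geq \ell/4$ with probability $1-e^{-\Omega(\ell)}$, compute the conditional miss probability $(1-|T^{(2)}|/|W|)^{\ell}\leq\exp(-\Omega(k^{2\delta}))$, and finish with a union bound over pairs. The only difference is the concentration step for $|T^{(2)}|$: the paper notes that each $s_i$ is ``fresh'' with probability $>1/2$ (using $\ell<|W|/2$), stochastically dominates by a sum of symmetric Bernoullis, and applies Chernoff, whereas you compute the exact expectation and invoke McDiarmid---both yield the same $e^{-\Omega(\ell)}$ bound.
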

\begin{proof}
	The second part of the claim follows directly from the first part by union bound.
	
	For the first part, let $s_1,\ldots,s_\ell$ be the vertices sampled in the process of sampling $\mathcal{W}_1$
	and $s_1',\ldots,s_\ell'$ be the vertices sampled in the process of sampling $\mathcal{W}_2$.
	It suffices to show that 
	$$\mathrm{Pr}(\{s_1,\ldots,s_\ell\} \cap \{s_1',\ldots,s_\ell'\} = \emptyset) \leq \exp\left(-\Omega(k^{2\delta})\right).$$
	
	Let $S = \{s_1,\ldots,s_\ell\}$.
	Observe that $\ell = \lfloor \frac{k^{0.5+\delta}}{72 \beta} \rfloor < \lfloor k/12 \rfloor \leq |W|/2$. 
	Hence,  for each $i \in [\ell]$ \[\mathrm{Pr}(s_i \in \{s_1,\ldots,s_{i-1}\}) < 0.5.\]
	Let $X_i = 1$ if $s_i \notin \{s_1,\ldots,s_{i-1}\}$ and $X_i = 0$ otherwise.
	Then, we have $|S| = \sum_{i=1}^\ell X_i$ and $\mathrm{Pr}(X_i = 1) > 0.5$.
	
	Let $X_1',X_2',\ldots,X_\ell'$ be independent symmetrical Bernoulli variables.
	From the previous paragraph we infer that for every real $r$ we have
	$\mathrm{Pr}(\sum_{i=1}^\ell X_i \geq r) \geq \mathrm{Pr}(\sum_{i=1}^\ell X_i' \geq r)$. 
	Hence, by the Chernoff inequality $\mathrm{Pr}(\sum_{i = 1}^\ell X_i' \leq (1 - a) \mu) \leq e^{-a^2\mu/2}$ for $0 \leq a \leq 1$, where $\mu = \ell/2$ is the expected value of $\sum_{i = 1}^\ell X_i'$, we have:
	%\begin{align*}
	$$\mathrm{Pr}(|S| < \ell / 4) = \mathrm{Pr}\left(\sum_{i=1}^\ell X_i < \ell/4\right) 
	\leq \mathrm{Pr}\left(\sum_{i=1}^\ell X_i' < \ell / 4\right) 
	\leq e^{-\ell/16}.$$
	%    \todo[inline]{ms: I replaced $\mathrm{Pr}\left(\sum_{i=1}^\ell X_i' \geq \ell / 4\right)$ with $\mathrm{Pr}\left(\sum_{i=1}^\ell X_i' < \ell / 4\right)$ in the above equation, correct?\\Please also check that I applied the Chernoff bound correctly.\\
		%    mh: I think it is, I added $0 \leq a \leq 1$.}
	%\end{align*}
	The above allows us to condition now on the event $|S| \geq \ell/4$.
	Let us move to sampling vertices~$s_i'$.
	We have
	\[\mathrm{Pr}(\forall_{i=1}^\ell s_i' \notin S) = \left(1-\frac{|S|}{k}\right)^\ell 
	\leq \left(1-\frac{\ell/4}{k}\right)^\ell 
	\leq \exp\left(-\Omega(k^{2\delta})\right) \text{, as required.} \]
	\cqed
	\end{proof}
Let $\lambda > 0$ be the constant of \Cref{cl:many-walks}
and let $\mathcal{B}$ be a %\todo{ms: Added ``sequence'', ok?}
sequence of $\lfloor \exp(\lambda k^{2\delta}) \rfloor$ closed walks
sampled independently. 
\Cref{cl:many-walks} asserts that with probability more than $0.5$ the family
$\mathcal{B}$ (where every walk is interpreted as its vertex set) is a bramble in $G$.

\paragraph{Order of the sampled bramble.}
We now show that for a sufficiently small constant $\alpha > 0$,
with probability more than $0.5$
for every set $X \subseteq V(G)$ of size at most $\alpha \ell / \log k$ 
there exists $\mathcal{W} \in \mathcal{B}$ that is disjoint with $X$.
Together with the fact that $\mathcal{B}$ is a bramble with probability larger than $0.5$, 
this proves that with positive probability both $\mathcal{B}$ is a bramble and the minimum size of a hitting set of $\mathcal{B}$ is $\Omega(k^{0.5+\delta}/\log k)$,
concluding the proof of \cref{thm:lb}.

By \cref{cl:sample-cycle}, for a 
%\todo{ms: I'm a bit confused by the terminology. If I understand correctly, our experiment is to sample a sequence walks. How can we then assume that the walk is fixed? Should we maybe say that we sample walk 1, 2, and so on, and here we fix the index i?}
% This is a notation problem, should be "for one W", changed.
single walk $\mathcal{W} \in \mathcal{B}$, we have
$$\mathrm{Pr}(X \cap V(\mathcal{W}) = \emptyset) \geq \exp\left(-\Oh(\alpha k^{2\delta})\right).$$
Recall that the walks in $\mathcal{B}$ are sampled independently, so, for a fixed vertex subset $X \subseteq V(G)$,
the events $(X \cap V(\mathcal{W}) = \emptyset)_{\mathcal{W} \in \mathcal{B}}$ are independent. 
Hence, for a fixed vertex subset $X \subseteq V(G)$ of size at most $\alpha \ell / \log k$ we have
$$\mathrm{Pr}(\forall_{\mathcal{W} \in \mathcal{B} } X \cap V(\mathcal{W}) \neq \emptyset) 
\leq \left(1 - \exp\left(-\Oh(\alpha k^{2\delta})\right)\right)^{\lfloor \exp(\lambda k^{2\delta}) \rfloor}
\leq \exp\left(- \exp(\lambda/2 \cdot k^{2\delta})\right).$$
\newcommand{\W}{\ensuremath{\mathcal{W}}}

%\todo[inline]{ms: I'm missing something for the first inequality above. I suppose we compute $\mathrm{Pr}(\forall_{\mathcal{W} \in \mathcal{B} } X \cap V(\mathcal{W}) \neq \emptyset)$
	%  by conditioning on the individual events for each walk, i.e. take as sample space the set of possible $\mathcal{B} = (\W_1, \ldots, \W_b)$ and define the events $E_{i} = \{(\ldots, \W_i, \ldots) \mid V(\W_i) \cap X \neq \emptyset\}$ and then $\mathrm{Pr}(\forall_{\mathcal{W} \in \mathcal{B} } X \cap V(\mathcal{W}) \neq \emptyset) =
	%  \mathrm{Pr}(\bigcap E_i) = \mathrm{Pr}(\bigcap_{i \in [b - 1]} E_i \mid E_{b}) \cdot \mathrm{Pr}(E_{b}) = \mathrm{Pr}(\bigcap_{i \in [b - 2]} E_i \mid E_{b} \cap E_{b - 1}) \cdot
	%  \mathrm{Pr}(E_{b - 1} \mid E_{b}) \cdot \mathrm{Pr}(E_{b})$ and so on. But why can't we have e.g. $\mathrm{Pr}(E_{b - 1} \mid E_{b}) > 1 - \exp\left(-\Oh(\alpha k^{2\delta})\right) $? Also if we later condition on the event that $\mathcal{B}$ is a bramble, doesn't this increase the probability of $X$ being a hitting set?} 

% First, the events E_i in your description are independent. Second, we do not condition here on B being a bramble, we union bound bad events "there is some small hitting set" and "B is not a bramble".

Here, in the last inequality we have chosen $\alpha$ small enough so that
$\alpha$ times the constant hidden in the big-$\Oh$ notation is smaller than $\lambda/2$.
As $|V(G)| = \widetilde{\Oh}(k^4)$, there are $2^{\Oh(\alpha k^{0.5+\delta})}$ choices
of a set $X \subseteq V(G)$ of size at most $\alpha \ell / \log k$.
By union bound over the possible sets $X$ we obtain:
$$\mathrm{Pr}\left( \exists_{X \subseteq V(G)} |X| \leq \alpha \ell / \log k \wedge 
\forall_{\mathcal{W} \in \mathcal{B}} X \cap V(\mathcal{W}) \neq \emptyset \right)
\leq \exp\left(\Oh(\alpha k^{0.5+\delta}) - \exp(\lambda/2 \cdot k^{2\delta})\right).$$
By choosing $\alpha$ sufficiently small, the last expression can be made to be less than $0.5$.

Hence, with probability more than $0.5$ the family $\mathcal{B}$ does not admit a hitting set of size at most $\alpha \ell/\log k$
and with probability more than $0.5$ the family $\mathcal{B}$ is a bramble.
By the union bound, $\mathcal{B}$ is a bramble of order $\widetilde{\Omega}(k^{0.5+\delta})$ with positive probability.
This concludes the proof of \cref{thm:lb}.

\nocite{*}
\bibliographystyle{abbrvnat}
% use the following instead if you encounter problems 
%\bibliographystyle{alpha}
\bibliography{literature}
\label{sec:biblio}

\end{document}